\newtheorem{theorem}{Theorem}[section]
\newtheorem{lemma}[theorem]{Lemma}
\newtheorem{corollary}[theorem]{Corollary}
\theoremstyle{definition}
\newtheorem{definition}[theorem]{Definition}
\newtheorem{example}[theorem]{Example}
\theoremstyle{remark}
\numberwithin{equation}{section}
\begin{document}
	
	\setcounter{page}{1}
	
	\title{New fixed point theorems for $(\phi, F)-$contraction on rectangular b-metric spaces}
	
	\author{ Mohamed Rossafi$^{1*}$ and Abdelkarim Kari$^{2}$}
	
	\address{$^{1}$LaSMA Laboratory Department of Mathematics Faculty of Sciences, Dhar El Mahraz University Sidi Mohamed Ben Abdellah, B. P. 1796 Fes Atlas, Morocco}
	\email{\textcolor[rgb]{0.00,0.00,0.84}{rossafimohamed@gmail.com; mohamed.rossafi@usmba.ac.ma}}
	\address{$^{2}$AMS Laboratory Faculty of Sciences Ben M’Sik, Hassan II University, Casablanca, Morocco}
	\email{\textcolor[rgb]{0.00,0.00,0.84}{abdkrimkariprofes@gmail.com}}
	
	\subjclass[2010]{Primary 47H10; Secondary 54H25.}
	
	\keywords{Fixed point, rectangular b-metric spaces, $(\phi, F)-$contraction.}
	
	\date{
		\newline \indent $^{*}$Corresponding author}

	\begin{abstract}
		The Banach contraction principle is the most celebrated fixed point theorem, it has been generalized in various directions.
		In this paper, inspired by the concept of $(\phi, F)-$contraction in metric spaces, introduced by Wardowski.
		We present the notion of $(\phi, F)-$contraction in  $b-$rectangular metric spaces to study the existence and uniqueness of fixed point for the mappings in this spaces. Our results improve many existing results.
	\end{abstract}
	\maketitle
	\section{Introduction }
	The Banach contraction principle is an important result in the theory of metric spaces \cite{BA}. Many mathematician studied a lot of interesting extensions and generalizations, (see \cite{BRO,KAN,RE} and the recent works of Wardowski in \cite{WAR,WARD1}. Piri and Kumam introduced a new type of contractions called F-contraction \cite{PIRI} and  F-weak contraction \cite{PIRI1} and proved a new fixed point theorem concerning F-contractions. His valuable work has been elaborated via generalizing the Banach contraction principle. As a consequence of those generalizations so many contraction were introduced $ w $-distance \cite{R. Batra 2020R}, generalized weakly 
contraction mappings \cite{WK}, rational contractions \cite{H.I.J}, Caristi’s theorem \cite{KIR}. Huge work have been done in this direction.
	
	A well-known generalization of metric spaces are $b-$metric spaces were introduced by Czerwik \cite{CZ}, many mathematicians worked on this interesting space. For more, the reader can refer to \cite{CJS, H, HH}.
	
In 2014, Jleli et al. in \cite{JS, JSA} analysed a generalization of the Banach fixed point theorem in a new type of contraction mappings on metric spaces called $\theta-$contraction. In 2018, Wardowski \cite{WAR2} analysed a generalization of the Banach fixed point theorem in a new type of contraction mappings on metric spaces called $(\phi, F)-$contraction.

	Very recently Kari et al. \cite{KRMA} extended Wardowski’s ideas to the case of nonlinear $F-$contraction via $w-$distance and studied the solution of certain integral equations under a suitable set of hypotheses.
	 
In 2000, for the first time generalized metric spaces were introduced by Branciari \cite{BRA}, in such a way that triangle inequality is replaced by the quadrilateral inequality  
	\begin{equation*}
		d(x,y)\leq d(x,z)+d(z,u)+d(u,y), 
	\end{equation*}  
	for all pairwise distinct points $ x,y,z $ and $ u $. Any metric space is a generalized metric space but in general, generalized metric space might not be a metric space. Various fixed point results were established on such spaces, (see \cite{AZ,JS,SM} and references therein).

In 2015 George et al. \cite{RG} announced the notion of $b-$rectangular metric space, many authors initiated and studied a lot of existing fixed point theorems in such spaces, (see \cite{DIN,DINI,KARO,KAR,RO}).

	Very recently, Kari et al. \cite{KAR} introduced the notion of $\theta-\phi-$contraction in $b-$rectangular metric spaces and proved a fixed point theorem for $\theta-\phi-$contraction in $ b-$rectangular metric spaces.
	
	Motivated by the results of Wardowski in \cite{WAR} and of  Kari et al. \cite{KARO}, we establish in this paper a fixed point result for  $(\phi, F)-$contraction in the setting of $b-$rectangular metric spaces. The results presented in the paper extend the corresponding results of Kannan \cite{KAN} and Reich \cite{RE} on $b-$rectangular metric space.
	\section{preliminaries}
	\begin{definition}
		\cite{RG}. Let $X$ be a nonempty set, $s\geq 1$ be a given real number, and let
		
		d: $X\times X\rightarrow \left[ 0,+\infty \right[ $
		be a mapping such that for all $x,y$ $\in X$ and all distinct points $u,v\in
		X,$ each  distinct from $x$ and $y$:
		\begin{itemize}
			\item[1.] $d\left(x, y\right) =0,$ if only if $x=y;$
			\item[2.] $d\left(x, y\right) =d\left(y, x\right);$
			\item[3.] $d\left(x, y\right) \leq $ $s\left[d\left( x,u\right)+d\left(u, v\right) +d\left(v, y\right) \right] $ $\left( b-rectan gular\ inequality \right) .$
		\end{itemize}
		Then $\left(X, d\right) $ is called a $b-$rectangular metric space.
	\end{definition}
	\begin{example}
		\cite{KARO}. Let $ X=A\cup B $, where $ A=\lbrace \frac{1}{n}:n\in\lbrace 2,3,4,5,6,7\rbrace \rbrace $ and $ B=\left[1,2 \right]  $. Define $ d:X\times X\rightarrow \left[0,+\infty \right[  $ as follows:
		\begin{equation*}
			\left\lbrace
			\begin{aligned}
				d(x, y) &=d(y, x)\ for \ all \  x,y\in X;\\
				d(x, y) &=0\Leftrightarrow y= x.	
			\end{aligned}
			\right.
		\end{equation*}
		and
		\begin{equation*}
			\left\lbrace
			\begin{aligned}		    
				d\left( \frac{1}{2},\frac{1}{3}\right) =d\left( \frac{1}{4},\frac{1}{5}\right) =d\left( \frac{1}{6},\frac{1}{7}\right) 	&=0,05\\
				d\left( \frac{1}{2},\frac{1}{4}\right) =d\left( \frac{1}{3},\frac{1}{7}\right) =d\left( \frac{1}{5},\frac{1}{6}\right) 	&=0,08\\
				d\left( \frac{1}{2},\frac{1}{6}\right) =d\left( \frac{1}{3},\frac{1}{4}\right) =d\left( \frac{1}{5},\frac{1}{7}\right) 	&=0,4\\
				d\left( \frac{1}{2},\frac{1}{5}\right) =d\left( \frac{1}{3},\frac{1}{6}\right) =d\left( \frac{1}{4},\frac{1}{7}\right) 	&=0,24\\
				d\left( \frac{1}{2},\frac{1}{7}\right) =d\left( \frac{1}{3},\frac{1}{5}\right) =d\left( \frac{1}{4},\frac{1}{6}\right) 	&=0,15\\
				d\left( x,y\right) =\left( \vert x-y\vert\right) ^{2} \ otherwise.
			\end{aligned}
			\right.
		\end{equation*}
		Then $ (X,d) $ is a b-rectangular metric space with coefficient s=3.
	\end{example}
	\begin{lemma}\label{Lemma 2.3}
		\cite{RO}. Let $\left(X, d\right) $ be a b-rectangular metric space.
		\begin{itemize}
			\item[(a)]  Suppose that sequences $\lbrace  x_{n}\rbrace $ and $\lbrace y_{n}\rbrace $ in $X$ are such that $x_{n}\rightarrow x$ and $y_{n}\rightarrow y$ as $n\rightarrow \infty ,$ with $x\neq y,$ $x_{n}\neq x$ and $y_{n}\neq y$ for all $n\in \mathbb{N}.$ Then we have
			\begin{equation*}
				\frac{1}{s}d\left( x,y\right) \leq \lim_{n\rightarrow \infty }\inf d\left(x_{n},y_{n}\right) \leq \lim_{n\rightarrow \infty }\sup d\left(x_{n},y_{n}\right) \leq sd\left( x,y\right).
			\end{equation*}
			\item[(b)]  if $y\in X$ and $\lbrace x_{n}\rbrace $ is a Cauchy sequence in $X$ with $x_{n}\neq x_{m}$ for any $m,n\in \mathbb{N},$ $m\neq n,$ converging to $x\neq y,$ then
			\begin{equation*}
				\frac{1}{s}d\left( x,y\right) \leq \lim_{n\rightarrow \infty }\inf d\left(x_{n},y\right) \leq \lim_{n\rightarrow \infty }\sup d\left( x_{n},y\right)\leq sd\left( x,y\right),
			\end{equation*}
			for all $x\in X.$
		\end{itemize}
	\end{lemma}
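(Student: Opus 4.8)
The plan is to obtain both the lower and the upper estimate from two applications of the b-rectangular (quadrilateral) inequality, feeding in the four points $x_{n}, y_{n}, x, y$ in the two possible orders; throughout, $\liminf$ and $\limsup$ are understood as $n \to \infty$. For the upper bound in part (a), I would apply the inequality with endpoints $x_{n}, y_{n}$ and intermediate points $x, y$, which gives
\[
d(x_{n}, y_{n}) \leq s\bigl[d(x_{n}, x) + d(x, y) + d(y, y_{n})\bigr].
\]
Because $d(x_{n}, x) \to 0$ and $d(y_{n}, y) \to 0$ by hypothesis, passing to $\limsup$ yields $\limsup d(x_{n}, y_{n}) \leq s\,d(x, y)$. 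For the lower bound I reverse the roles: applying the inequality with endpoints $x, y$ and intermediate points $x_{n}, y_{n}$ gives
\[
d(x, y) \leq s\bigl[d(x, x_{n}) + d(x_{n}, y_{n}) + d(y_{n}, y)\bigr],
\]
whence $\tfrac{1}{s}d(x, y) - d(x, x_{n}) - d(y_{n}, y) \leq d(x_{n}, y_{n})$; passing to $\liminf$ and again using $d(x_{n}, x), d(y_{n}, y) \to 0$ produces $\tfrac{1}{s}d(x, y) \leq \liminf d(x_{n}, y_{n})$. Since $\liminf \leq \limsup$ always holds, chaining the three inequalities gives (a).

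The step I expect to be the main obstacle is verifying the distinctness requirements that the b-rectangular inequality imposes on its intermediate points, which is genuinely delicate here because sequential limits in a b-rectangular space need not be unique, so one cannot separate the two sequences by a Hausdorff-type argument. For the upper bound I need $x, y$ to be mutually distinct (given) and distinct from $x_{n}, y_{n}$: the conditions $x_{n} \neq x$ and $y_{n} \neq y$ are assumed, while $x_{n} \neq y$ and $y_{n} \neq x$ hold for all large $n$ because $d(x_{n}, x) \to 0$ with $x \neq y$ forbids $x_{n} = y$ infinitely often. The reverse application additionally forces $x_{n} \neq y_{n}$ for large $n$; this is not only needed to apply the inequality but is also compelled by the conclusion, since $x_{n} = y_{n}$ would give $d(x_{n}, y_{n}) = 0 < \tfrac{1}{s}d(x, y)$. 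I would establish it by ruling out a subsequence along which $x_{n_{k}} = y_{n_{k}}$, as such a subsequence would converge simultaneously to $x$ and to $y$; this is the one place the argument must engage directly with the weak separation properties of the space.

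Part (b) is parallel, but the constant second coordinate $y$ rules out the choice of intermediate points used above, and it is here that the Cauchy hypothesis enters. For the upper bound I would use, for $m \neq n$,
\[
d(x_{n}, y) \leq s\bigl[d(x_{n}, x_{m}) + d(x_{m}, x) + d(x, y)\bigr],
\]
fix $m$ large so that $d(x_{m}, x)$ is small, bound $\limsup_{n} d(x_{n}, x_{m})$ using the Cauchy property, and let the thresholds tend to $0$; for the lower bound I would symmetrically use
\[
d(x, y) \leq s\bigl[d(x, x_{m}) + d(x_{m}, x_{n}) + d(x_{n}, y)\bigr]
\]
and absorb the first two terms the same way. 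Here the distinctness hypotheses are harmless: the terms $x_{n}$ are pairwise distinct by assumption, at most one of them equals $x$, and all but finitely many differ from $y$ since $x_{n} \to x \neq y$.
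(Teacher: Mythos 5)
The paper itself offers no proof of this lemma; it is quoted verbatim from \cite{RO}, and the proof given there consists of exactly the two quadrilateral estimates you display, with limits taken in each. So your route is the standard one, and your treatment of part (b) and of the upper bound in part (a) is correct --- in fact your bookkeeping of the distinctness requirements (that $x_{n}\neq y$ and $y_{n}\neq x$ eventually, that the pairwise-distinct $x_{n}$ in (b) can meet $x$ or $y$ at most once) is \emph{more} careful than the cited source, which passes over these points in silence.

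The genuine gap is the step you flagged yourself: securing $x_{n}\neq y_{n}$ for the lower-bound application of the inequality in part (a). The contradiction you propose --- that a subsequence with $x_{n_{k}}=y_{n_{k}}$ would converge simultaneously to $x$ and to $y$ --- is not a contradiction, for precisely the reason you state two sentences earlier: limits in a b-rectangular space need not be unique. Indeed the degenerate case cannot be excluded at all. Take $X=\lbrace a,b\rbrace \cup \lbrace z_{1},z_{2},\dots\rbrace$ with all points distinct, $d$ symmetric and vanishing exactly on the diagonal, $d(a,b)=1$, $d(a,z_{k})=d(b,z_{k})=\frac{1}{k}$, and $d(z_{j},z_{k})=1$ for $j\neq k$. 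Every admissible three-hop path joining two points at distance $1$ contains a hop of length $1$ (a path $a\to z_{j}\to z_{k}\to b$ has middle hop $1$; a path $z_{j}\to u\to v\to z_{k}$ has some hop equal to $d(a,b)$ or to some $d(z_{m},z_{l})$), so the rectangular inequality holds with $s=1$ and $(X,d)$ is a b-rectangular metric space for every $s\geq 1$. Then $z_{k}\to a$ and $z_{k}\to b$ with $z_{k}\neq a,b$, so putting $x_{n}=y_{n}=z_{n}$, $x=a$, $y=b$ satisfies every hypothesis of part (a), yet $\liminf_{n} d(x_{n},y_{n})=0<\frac{1}{s}d(a,b)$. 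Consequently the lower bound in (a) is false as literally stated: the lemma carries the tacit hypothesis $x_{n}\neq y_{n}$ (for all large $n$), inherited from \cite{RO}. The honest repair is to add that hypothesis to the statement --- under it your argument is complete, and nothing downstream in the paper is affected, since the lemma is only ever invoked with a constant second sequence or with the relevant distances strictly positive --- rather than to attempt to derive it, which the counterexample shows is impossible.
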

	\begin{lemma}\cite{KAR}\label{Lemma 2.4}
		Let $\left(X, d\right) $ be a b-rectangular metric space and let $ \lbrace x_n \rbrace $ be a sequence in $ X $ such that 
		\begin{equation}
			\lim_{n\rightarrow \infty } d\left(x_{n},x_{n+1}\right)= \lim_{n\rightarrow \infty }\ d\left(x_{n},x_{n+2}\right)=0.
		\end{equation}
		If $ \lbrace x_n \rbrace $ is not a Cauchy sequence, then there exist $ \varepsilon >0 $ and two sequences $ \lbrace m(k) \rbrace $ and $ \lbrace n(k) \rbrace $ of positive integers such that
		$$\varepsilon \leq \lim_{k\rightarrow \infty }\inf d\left( x_{m_{\left( k\right) }},x_{n_{\left( k\right)}}\right)  \leq \lim_{k\rightarrow \infty }\sup d\left( x_{m_{\left( k\right) }},x_{n_{\left( k\right)}}\right)\leq s\varepsilon ,$$ 
		$$\varepsilon \leq \lim_{k\rightarrow \infty }\inf d\left( x_{n_{\left( k\right) }},x_{m_{\left( k\right)+1}}\right)  \leq \lim_{k\rightarrow \infty }\sup d\left( x_{n_{\left( k\right) }},x_{m_{\left( k\right)+1}}\right)\leq s\varepsilon ,$$ 
		$$\varepsilon \leq \lim_{k\rightarrow \infty }\inf d\left( x_{m_{\left( k\right) }},x_{n_{\left( k\right)+1}}\right)  \leq \lim_{k\rightarrow \infty }\sup d\left( x_{m_{\left( k\right) }},x_{n_{\left( k\right)+1}}\right)\leq s\varepsilon ,$$ 
		$$\frac{\varepsilon}{s} \leq \lim_{k\rightarrow \infty }\inf d\left( x_{m_{\left( k\right)+1 }},x_{n_{\left( k\right)+1}}\right)  \leq \lim_{k\rightarrow \infty }\sup d\left( x_{m_{\left( k\right)+1 }},x_{n_{\left( k\right)+1}}\right)\leq s^{2}\varepsilon .$$   
	\end{lemma}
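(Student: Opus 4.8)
The plan is to run the classical ``not-Cauchy'' extraction argument, but to replace every use of the triangle inequality by the $b$-rectangular (quadrilateral) inequality of Definition~2.1(3), inserting \emph{two} intermediate points at each step and exploiting the two hypotheses $d(x_n,x_{n+1})\to 0$ and $d(x_n,x_{n+2})\to 0$ to annihilate the resulting adjacent and two-step terms. First I would negate the Cauchy property: there is some $\varepsilon>0$ such that for every $N$ one can find indices exceeding $N$ whose distance is at least $\varepsilon$. Using this, I would build the two index sequences greedily: fix $n(k)\ge k$ and let $m(k)$ be the \emph{smallest} integer with $m(k)>n(k)$ and $d(x_{m(k)},x_{n(k)})\ge\varepsilon$. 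This simultaneously gives the lower estimate $d(x_{m(k)},x_{n(k)})\ge\varepsilon$, which yields the left inequality of the first line, and, by minimality, the strict bounds $d(x_{m(k)-1},x_{n(k)})<\varepsilon$ and $d(x_{m(k)-2},x_{n(k)})<\varepsilon$, which will feed all the upper estimates. Note that the two hypotheses also force $m(k)-n(k)\ge 3$ for large $k$: a gap of $1$ or $2$ would make $d(x_{m(k)},x_{n(k)})$ a one- or two-step distance, hence $<\varepsilon$ eventually, a contradiction; this is what legitimizes inserting $x_{m(k)-1}$ and $x_{m(k)-2}$.

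For the upper bound of the first line I would apply the quadrilateral inequality to $d(x_{m(k)},x_{n(k)})$ through the two fresh intermediate points $x_{m(k)-1},x_{m(k)-2}$,
\[
d(x_{m(k)},x_{n(k)})\le s\big[\,d(x_{m(k)},x_{m(k)-1})+d(x_{m(k)-1},x_{m(k)-2})+d(x_{m(k)-2},x_{n(k)})\,\big],
\]
and then let $k\to\infty$: the first two summands vanish by the hypotheses while the last is $<\varepsilon$ by minimality, giving $\limsup_k d(x_{m(k)},x_{n(k)})\le s\varepsilon$.

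The second and third lines are handled in the same spirit. For their upper bounds I anchor the quadrilateral at the ``almost-$\varepsilon$'' term supplied by minimality (routing $d(x_{n(k)},x_{m(k)+1})$ through $x_{m(k)-1},x_{m(k)}$, and $d(x_{m(k)},x_{n(k)+1})$ through the analogous near neighbours) and send the adjacent distances to $0$. For their lower bounds I instead start from the quantity I already control, $d(x_{m(k)},x_{n(k)})\ge\varepsilon$, and expand \emph{it} by a quadrilateral that exposes the target distance on the right-hand side while the remaining summands are of one-step or two-step type and hence tend to $0$. The fourth line, concerning $d(x_{m(k)+1},x_{n(k)+1})$, then follows from one further quadrilateral that routes through $x_{m(k)}$ and $x_{n(k)}$; this is precisely the step that inserts an extra factor of $s$, which is why its bounds widen to $\varepsilon/s$ and $s^{2}\varepsilon$ rather than the $\varepsilon$ and $s\varepsilon$ of the earlier lines.

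The main obstacle is not the limiting arithmetic but the bookkeeping forced by the four-point inequality. Unlike the triangle inequality, the quadrilateral inequality is available only for pairwise distinct points, so at every application I must check that the two inserted intermediate indices yield points distinct from the endpoints and from each other; this is exactly where the hypothesis $d(x_n,x_{n+2})\to 0$ earns its place, since the rectangular inequality unavoidably introduces two-step neighbours and one needs $x_n\neq x_{n+1}$ and $x_n\neq x_{n+2}$ eventually (arranged by discarding finitely many terms, or automatic once a term repeats) for the estimates to be valid. The second delicate point is to choose, at each of the four lines, the pair of intermediate points so that \emph{all} leftover distances are of adjacent or two-step type and therefore vanish, and to track carefully how many factors of $s$ each insertion costs, so that the stated constants $s,s,s$ on the first three lines and $s^{2}$ (together with $\tfrac1s$ on the lower side) on the last line come out correctly.
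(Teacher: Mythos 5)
The paper itself contains no proof of Lemma~\ref{Lemma 2.4} --- it is quoted from the reference \cite{KAR} --- so your attempt can only be measured against the standard extraction argument that such sources use. Your plan is that argument: negate Cauchyness, minimize one index, and replace triangle estimates by quadrilateral ones, letting the hypotheses kill all one-step and two-step terms. Carried out, it does correctly deliver the first line, the upper bounds $s\varepsilon$ of the second and third lines, and both bounds of the fourth line (modulo a small well-definedness slip noted below).

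The genuine gap is in the \emph{lower} bounds of the second and third lines, precisely where you claim the stated constants ``come out correctly''. Your method there is to expand the anchor inequality $d(x_{m(k)},x_{n(k)})\ge\varepsilon$ by a quadrilateral that exposes the target distance, e.g.
\begin{equation*}
\varepsilon \le d\left(x_{m(k)},x_{n(k)}\right) \le s\left[d\left(x_{m(k)},x_{m(k)+2}\right)+d\left(x_{m(k)+2},x_{m(k)+1}\right)+d\left(x_{m(k)+1},x_{n(k)}\right)\right],
\end{equation*}
and then let the two short terms vanish. But this yields only $\liminf_{k} d\left(x_{m(k)+1},x_{n(k)}\right)\ge \varepsilon/s$, not $\ge\varepsilon$: the factor $s$ multiplies the whole bracket and cannot be discarded --- exactly the loss you yourself identify when explaining why the fourth line widens to $\varepsilon/s$ and $s^{2}\varepsilon$. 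The identical loss occurs for $d\left(x_{m(k)},x_{n(k)+1}\right)$. So the argument you describe proves $(\varepsilon/s,\,s\varepsilon)$ on lines two and three, while the lemma asserts $(\varepsilon,\,s\varepsilon)$, and nothing in your sketch bridges this factor of $s$; no choice of the two inserted points can avoid it, since every route from the anchor inequality to the target distance invokes the $b$-rectangular inequality, hence costs a factor $s$, at least once. Either these lower bounds require an idea absent from your sketch, or they need the same $1/s$ weakening you applied to line four --- in which case what you prove is not the statement as given. A smaller, fixable defect: as written, ``fix $n(k)\ge k$ and let $m(k)$ be the smallest integer with $m(k)>n(k)$ and $d(x_{m(k)},x_{n(k)})\ge\varepsilon$'' is not well defined, because for a fixed $n(k)$ there may be no index $m>n(k)$ at distance $\ge\varepsilon$ from $x_{n(k)}$; one must first take the \emph{pair} of indices supplied by the negation of the Cauchy property and only then minimize the larger index with the smaller one held fixed.
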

	The following definition introduced by Wardowski. 
	\begin{definition}
		\cite{WAR}. Let $\digamma $ be the family of all functions $F\colon $
		$\mathbb{R}^{+}\rightarrow \mathbb{R}$ such that
		\begin{itemize}
			\item[(i)] $F$ is strictly increasing;
			\item[(ii)] For each sequence $\lbrace x_{n}\rbrace _{n\in \mathbb{N}
			}$ of positive numbers 
			\begin{equation*}
				\lim_{n\rightarrow 0}x_{n}=0,\,\,\,\text{ if and only if }\,\,\,\lim_{n\rightarrow \infty }F\left( x_{n}\right) =-\infty; 
			\end{equation*}
			\item[(iii)] There exists $k\in \left] 0,1\right[ $ such that $\lim_{x\rightarrow 0}x^{k}F\left( x\right) =0.$
		\end{itemize}
	\end{definition}
	Recently, Piri and Kuman \cite{PIRI} extended the result of Wardowski \cite{WAR} by changing the condition $ (iii) $ in Definition $ 2.5 $ as follow. 
	\begin{definition}
		\cite{PIRI}. Let $\Gamma $ be the family of all functions $ F\colon $ $
		\mathbb{R}^{+}\rightarrow \mathbb{R}$ such that
		\begin{itemize}
			\item[(i)] $F$ is strictly increasing;
			\item[(ii)] For each sequence $\lbrace x_{n}\rbrace _{n\in \mathbb{N}
			}$ of positive numbers 
			\begin{equation*}
				\lim_{n\rightarrow\infty}x_{n}=0,\,\,\,\text{ if and only if }%
				\,\,\,\lim_{n\rightarrow \infty }F\left( x_{n}\right) =-\infty;
			\end{equation*}
			\item[(iii)] $F$ is continuous.
		\end{itemize}
	\end{definition}
	The following definition introduced by Wardowski \cite{WAR2} will be used to prove our result.
	\begin{definition}\label{2.7}
		\cite{WAR2}. Let $ \mathbb{F} $ be the family of all functions   $F\colon \mathbb{R}^{+}\rightarrow \mathbb{R}$ and\\  $\phi: \left]0,+\infty \right[\rightarrow \left]0,+\infty \right[$ satisfy the following.
		\begin{itemize} 
			\item[(i)] $F$ is strictly increasing;
			\item[(ii)] For each sequence $\lbrace x_{n}\rbrace _{n\in \mathbb{N}}$ of positive numbers 
			\begin{equation*}
				\lim_{n\rightarrow\infty}x_{n}=0,\,\,\,\text{ if and only if }
				\lim_{n\rightarrow \infty }F\left( x_{n}\right) =-\infty;
			\end{equation*}
			\item[(iii)]  $\liminf_{s \rightarrow \alpha^{+}}\phi(s)> 0 $ for all $ s> 0; $
			\item[(iv)]There exists $ k\in \left]0,1 \right[  $ such that 
			\begin{equation*}
				\lim_{x\rightarrow 0^{+}}x^{k}F(x) =0. 
			\end{equation*}
		\end{itemize}
	\end{definition}
	Inspired by Wardowski \cite{WAR2} and definition introduce by Cosentino et al. in \cite{CJS}, we give the following definition.
	\begin{definition}\label{2.8}
		Let Let $ \mathbb{F} $ be the family of all functions   $F\colon \mathbb{R}^{+}\rightarrow \mathbb{R}$ and $ \Phi $ be the family of all functions $\phi: \left]0,+\infty \right[\rightarrow \left]0,+\infty \right[$ satisfy the following.
		\begin{itemize} 
			\item[(i)] $F$ is strictly increasing;
			\item[(ii)] For each sequence $\lbrace x_{n}\rbrace _{n\in \mathbb{N}}$ of positive numbers 
			\begin{equation*}
				\lim_{n\rightarrow\infty}x_{n}=0,\,\,\,\text{ if and only if }
				\lim_{n\rightarrow \infty }F\left( x_{n}\right) =-\infty;
			\end{equation*}
			\item[(iii)]  $\liminf_{s \rightarrow \alpha^{+}}\phi(s)> 0 $ for all $ s> 0; $
			\item[(iv)]There exists $ k\in \left]0,1 \right[  $ such that 
			\begin{equation*}
				\lim_{x\rightarrow 0^{+}}x^{k}F(x) =0. 
			\end{equation*}
			\item[(v)] ) for each sequence $ \alpha_ {n}\in \mathbb{R^{+}}$ of positive numbers such that $\phi( \alpha_ {n})+F(s \alpha_ {n+1})\leq F( \alpha_ {n})  $ 
			for all $ n\in\mathbb{N} $, then $\phi( \alpha_ {n})+F(s^{n} \alpha_ {n+1})\leq F(s^{n-1} \alpha_ {n})  $  for all $ n\in\mathbb{N} $.
		\end{itemize} 
	\end{definition}
	\begin{example}
		Let $ F: \mathbb{R^{+}}\longrightarrow \mathbb{R^{+}}$ be defined by $F(x) = x + ln x  $. Clearly, $F  $ satisfies $ (i),(ii)\ and \ (iv) $ and $ \phi $ satisfies $ (ii) $. Here we show only $ (v). $
		
		Assume that, for all $ n\in\mathbb{N} $, we have
		$\phi( \alpha_ {n})+ln(s \alpha_ {n+1})\leq ln( \alpha_ {n}). $ 
		Since $ x + ln x $ is an increasing function, then $ s\alpha_ {n+1} < \alpha_ {n} $. Thus
		$$(s^{n-1}-1)s\alpha_ {n+1} +ln((s^{n-1})\leq (s^{n-1}-1)s\alpha_ {n} +ln((s^{n-1})  $$
		implies that
		\begin{align*}
			\phi( \alpha_ {n})+s^{n}\alpha_ {n+1}+ln(s^{n} \alpha_{n+1})&=\phi( \alpha_ {n})+s\alpha_ {n+1}+(s^{n-1}-1)s\alpha_ {n+1}+ln(s^{n-1})+ln(s \alpha_{n+1})\\
			&\leq \alpha_{n}+((s^{n-1}-1)\alpha_{n}+ln(s^{n-1})+ln(\alpha_{n})\\
			&=s^{n-1}\alpha_{n}+ln(s^{n-1}\alpha_{n}).
		\end{align*}
		and hence $ (iv) $ holds true.
	\end{example}
	By replacing the condition $ (iv) $ in definition $ \ref{2.7} $, we introduce a new class of $(\phi, F)-$contraction.
	\begin{definition}\label{2.10}
		Let Let $ \mathbb{\Im} $ be the family of all functions  $F\colon $ $\mathbb{R}^{+}\rightarrow \mathbb{R}$ and $ \Phi $ be the family of all functions $\phi: \left]0,+\infty \right[\rightarrow \left]0,+\infty \right[$ satisfy the following.
		\begin{itemize}  
			\item[(i)] $F$ is strictly increasing;
			\item[(ii)] For each sequence $\lbrace x_{n}\rbrace _{n\in \mathbb{N}}$ of positive numbers 
			\begin{equation*}
				\lim_{n\rightarrow\infty}x_{n}=0,\,\,\,\text{ if and only if }
				\lim_{n\rightarrow \infty }F\left( x_{n}\right) =-\infty;
			\end{equation*}
			\item[(iii)]  $\liminf_{s \rightarrow \alpha^{+}}\phi(s)> 0 $ for all $ s> 0; $
			\item[(iv)] $F$ is continuous.
		\end{itemize}  
	\end{definition}
	\begin{definition}
		\cite{WAR2}. Let $ (X, d) $ be a metric  space. A mapping $ T:X\rightarrow X $ is called an $ \left( \phi,F\right)  $-contraction on $ (X, d) $, if there exist $ F\in\mathbb{F} $  and $ \phi $ such that 
		$$F(d(Tx,Ty))+\phi\left( d(x,y)\right)  \leq F\left( d(x,y)\right)    $$
		for all $ x,y\in X $ for which $ Tx\neq Ty .$
	\end{definition}
	\begin{theorem}
		\cite{WAR2}. Let $\left( X,d\right) $ be a complete  metric space and let $T:X\rightarrow X$ be a $(\phi, F)-$contraction.
		Then $T$ has a unique fixed point. 
	\end{theorem}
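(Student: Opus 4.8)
The plan is to run the Picard iteration and imitate Wardowski's scheme, the only real adjustment being that $\phi$ is now a function of $d(x,y)$ rather than a fixed constant $\tau$. First I would fix $x_0\in X$ and set $x_{n+1}=Tx_n$. If $x_{n_0}=x_{n_0+1}$ for some $n_0$ the point $x_{n_0}$ is already fixed, so I may assume $d_n:=d(x_n,x_{n+1})>0$ for all $n$. Applying the $(\phi,F)$-contraction to the pair $(x_{n-1},x_n)$ (legitimate since $Tx_{n-1}=x_n\neq x_{n+1}=Tx_n$) gives
\[
F(d_n)+\phi(d_{n-1})\le F(d_{n-1}),
\]
and hence, by induction, $F(d_n)\le F(d_0)-\sum_{i=0}^{n-1}\phi(d_i)$. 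Because $F$ is strictly increasing and $\phi>0$, the sequence $\{d_n\}$ is strictly decreasing and therefore converges to some $\ell\ge 0$.

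The next step is to show $\ell=0$. If $\ell>0$, condition (iii) provides a positive lower bound for $\phi(d_i)$ for all large $i$, so $\sum_{i}\phi(d_i)=+\infty$ and thus $F(d_n)\to-\infty$; by (ii) this forces $d_n\to 0$, contradicting $\ell>0$. Hence $d_n\to 0$.

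The hard part will be proving that $\{x_n\}$ is Cauchy; this is where condition (iv) is used. Since $d_n\to 0^{+}$, condition (iii) again yields a constant $b>0$ and an index $N$ with $\phi(d_i)\ge b$ for $i\ge N$, so $F(d_n)\le F(d_0)-(n-N)b-\sum_{i<N}\phi(d_i)$, i.e.\ $F(d_n)$ tends to $-\infty$ at least linearly in $n$. Multiplying by $d_n^{k}$ (with the $k\in\,]0,1[$ supplied by (iv)) and letting $n\to\infty$, the term $d_n^{k}F(d_n)\to 0$ by (iv) while $d_n^{k}\to 0$, so I obtain $n\,d_n^{k}\to 0$. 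Consequently $d_n\le n^{-1/k}$ for $n$ large, and since $1/k>1$ the comparison series $\sum_n n^{-1/k}$ converges, whence $\sum_n d_n<\infty$. Summability of consecutive distances yields at once that $\{x_n\}$ is Cauchy, and completeness of $X$ gives $x_n\to x^{*}$ for some $x^{*}\in X$.

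Finally I would identify $x^{*}$ as the unique fixed point. Note first that $\phi>0$ together with strict monotonicity of $F$ turns the contraction into the strict inequality $d(Tx,Ty)<d(x,y)$ whenever $Tx\neq Ty$. To see $Tx^{*}=x^{*}$ I split into cases: if $x_{n+1}=Tx^{*}$ for infinitely many $n$, then $Tx^{*}=\lim_n x_{n+1}=x^{*}$; otherwise $Tx_n=x_{n+1}\neq Tx^{*}$ eventually, and then
\[
d(x^{*},Tx^{*})\le d(x^{*},x_{n+1})+d(Tx_n,Tx^{*})<d(x^{*},x_{n+1})+d(x_n,x^{*})\longrightarrow 0,
\]
so $d(x^{*},Tx^{*})=0$. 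Either way $Tx^{*}=x^{*}$. Uniqueness is immediate: if $x^{*}\neq y^{*}$ were two fixed points, then $Tx^{*}\neq Ty^{*}$ and the contraction gives $\phi\big(d(x^{*},y^{*})\big)\le 0$, contradicting the positivity of $\phi$. The only genuinely delicate point is the Cauchy estimate above; the remaining steps are routine.
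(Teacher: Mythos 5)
Your proof is correct and follows essentially the same route as the source: the paper states this theorem as a citation of Wardowski \cite{WAR2} without reproducing a proof, and your argument is precisely Wardowski's scheme (Picard iteration, the ratchet inequality $F(d_n)\le F(d_0)-\sum_{i<n}\phi(d_i)$, the liminf condition forcing $F(d_n)\to-\infty$ hence $d_n\to 0$, then condition (iv) giving $n\,d_n^{k}\to 0$, so $d_n\le n^{-1/k}$, summability and Cauchyness), which is also the skeleton of the paper's own proof of Theorem 3.5/3.2 in the b-rectangular setting, where the triangle-inequality summability step must be replaced by $s^{n}$-weights, control of $d(x_n,x_{n+2})$ and the quadrilateral inequality. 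The one point worth flagging is that your Cauchy step applies condition (iii) at $\alpha=0$, i.e.\ $\liminf_{t\to 0^{+}}\phi(t)>0$; the paper's wording of (iii) (``for all $s>0$'') is garbled, but the reading including $\alpha=0$ is the one intended in \cite{WAR2} and is exactly how the paper itself uses (iii) when bounding $\phi(d(x_{n-1},x_n))$ below along a sequence tending to $0$, so you are consistent with the source.
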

	\section{Main result}
	In this paper, using the idea introduced by Wardowski, we present the concept $(\phi, F)-$contraction in  b-rectangular metric spaces and we prove some fixed point results for such spaces.
	\begin{definition}
		Let $(X,d)$ be a b-rectangular metric space with parameter $ s >1 $ space and 
		
		$T:X\rightarrow X$ be a mapping.
		\begin{itemize}  
			\item [1.] $T$ is said to be a $(\phi, F)-$contraction of type $ \left( \mathbb{F}\right)  $ if there exist $ F\in\mathbb{F} $  and $ \phi\in\Phi $ such that 
			\begin{equation*}
				d\left( Tx,Ty\right) >0\Rightarrow F \left[s d\left( Tx,Ty\right)\right]+\phi(d(x,y))  \leq  F \left[ d\left( x,y\right) \right],
			\end{equation*}
			\item [2.] $T$ is said to be a $(\phi, F)-$contraction of type $\left( \mathbb{\Im} \right) $ if there exist $ F\in\mathbb{\Im} $  and $ \phi\in\Phi $ such that 
			\begin{equation*}
				d\left( Tx,Ty\right) >0\Rightarrow F \left[s^{2}d\left( Tx,Ty\right)\right]+\phi(d(x,y))  \leq  F \left[ M\left( x,y\right) \right],
			\end{equation*} 
			where 
			\begin{equation*}
				M\left( x,y\right)=\max \left\{ d\left( x,y\right) ,d\left( x,Tx\right) ,d\left( y,Ty\right),d\left( y,Tx\right)\right\}.
			\end{equation*}
			\item [3.] $T$ is said to be a $(\phi, F)-$Kannan-type $ (\mathbb{\Im} )$ contraction if there exist exist $F \in \mathbb{\Im} $ and $\phi\in\Phi $ such that  $ d\left( Tx,Ty\right)>0, $ 
			we have
			\begin{equation*}
				F\left[ s^{2}d\left( Tx,Ty\right) \right)]+\phi(d(x,y))\leq F\left( \frac{d\left( x,Tx\right)  +d\left( y,Ty\right)  }{2}\right).
			\end{equation*}
			\item [4.] $T$ is said to be a $(\phi, F)-$Reich-type  $ (\mathbb{\Im} )$ contraction if there exist exist $F \in \mathbb{F} $ and $\phi\in\Phi $
			such that $ d\left( Tx,Ty\right)>0, $
			we have 
			\begin{equation*}
				F \left[s^{2} d\left( Tx,Ty\right) \right)]+\phi(d(x,y))\leq  F \left( \frac{d\left( x,y\right)  +d\left( x,Tx\right) +d\left( y,Ty\right) }{3}\right).
			\end{equation*}
		\end{itemize} 
	\end{definition}
	\begin{theorem}
		Let $\left( X,d\right) $ be a complete b-rectangular metric space and let $T:X\rightarrow X$ be an $(\phi, F)-$contraction of type $ \left( \mathbb{F}\right)  $-contraction, i.e, there exist $ F \in \mathbb{F} $ and $\phi $ such that for any $ x,y \in X $, we have
		\begin{equation}\label{3.1}
			d\left( Tx,Ty\right) >0\Rightarrow F \left[sd\left( Tx,Ty\right)\right]+\phi((dx,y))  \leq  F \left[ d\left( x,y\right) \right].
		\end{equation}
		Then $T$ has a unique fixed point.
	\end{theorem}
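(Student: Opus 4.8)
The plan is to run the classical Picard iteration and extract convergence from the contraction inequality, reducing matters to the two asymptotic conditions $d(x_n,x_{n+1})\to 0$ and $d(x_n,x_{n+2})\to 0$ that feed Lemma \ref{Lemma 2.4}. Fix $x_0\in X$ and set $x_{n+1}=Tx_n$. If $x_N=x_{N+1}$ for some $N$ then $x_N$ is already a fixed point and we are done, so assume $x_n\neq x_{n+1}$, i.e. $\alpha_n:=d(x_n,x_{n+1})>0$, for every $n$. Applying \eqref{3.1} with $x=x_{n-1}$, $y=x_n$ gives $F(s\alpha_n)+\phi(\alpha_{n-1})\le F(\alpha_{n-1})$; since $\phi>0$ and $F$ is strictly increasing this forces $s\alpha_n<\alpha_{n-1}$, so $\{\alpha_n\}$ is strictly decreasing. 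In particular the $x_n$ are pairwise distinct (a repetition $x_n=x_m$ with $n<m$ would force $\alpha_n=\alpha_m$, contradicting strict monotonicity), which guarantees that all distances appearing below are strictly positive and \eqref{3.1} may be applied freely.

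To show $\alpha_n\to 0$, I would feed the inequality $F(s\alpha_n)+\phi(\alpha_{n-1})\le F(\alpha_{n-1})$ into property (v) of the class $\mathbb{F}$ (Definition \ref{2.8}) to upgrade it to the telescoping form $\phi(\alpha_n)+F(s^n\alpha_{n+1})\le F(s^{n-1}\alpha_n)$. Summing from $1$ to $n$ yields $F(s^n\alpha_{n+1})\le F(\alpha_1)-\sum_{i=1}^n\phi(\alpha_i)$. Since $\{\alpha_n\}$ decreases to some $\ell\ge 0$, if $\ell>0$ then $\alpha_n\to\ell^{+}$ and property (iii) gives $\liminf_i\phi(\alpha_i)>0$, so the series diverges, whence $F(s^n\alpha_{n+1})\to-\infty$; property (ii) then forces $s^n\alpha_{n+1}\to 0$ and a fortiori $\alpha_{n+1}\to 0$, contradicting $\ell>0$. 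Hence $\ell=0$. The identical argument applied to \eqref{3.1} with $x=x_{n-1}$, $y=x_{n+1}$ (so that $d(Tx_{n-1},Tx_{n+1})=d(x_n,x_{n+2})$) shows $d(x_n,x_{n+2})\to 0$.

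With both limits in hand I would argue by contradiction that $\{x_n\}$ is Cauchy. If not, Lemma \ref{Lemma 2.4} supplies $\varepsilon>0$ and subsequences $\{m(k)\},\{n(k)\}$ controlling $d(x_{m(k)},x_{n(k)})$ in $[\varepsilon,s\varepsilon]$ and $d(x_{m(k)+1},x_{n(k)+1})$ in $[\varepsilon/s,s^2\varepsilon]$. Applying \eqref{3.1} with $x=x_{m(k)}$, $y=x_{n(k)}$ gives
\[
F\!\left(s\,d(x_{m(k)+1},x_{n(k)+1})\right)+\phi\!\left(d(x_{m(k)},x_{n(k)})\right)\le F\!\left(d(x_{m(k)},x_{n(k)})\right).
\]
Passing to a subsequence along which $d(x_{m(k)},x_{n(k)})\to a\in[\varepsilon,s\varepsilon]$ and $d(x_{m(k)+1},x_{n(k)+1})\to b\in[\varepsilon/s,s^2\varepsilon]$, the $b$-rectangular inequality together with $d(x_n,x_{n+1})\to 0$ yields $a\le sb$. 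I expect this limit passage to be the crux of the proof: since $F$ is only assumed strictly increasing (not continuous), I would not take limits of $F$ directly but instead use monotonicity and one-sided limits, combined with the lower bound $\liminf_k\phi(d(x_{m(k)},x_{n(k)}))>0$ coming from (iii), to contradict $a\le sb$ (which makes $F$ of the left-hand argument dominate $F$ of the right-hand argument, leaving no room for the strictly positive $\phi$-deficit). This rules out the non-Cauchy alternative.

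Finally, completeness gives $x_n\to z$ for some $z\in X$. Assuming $Tz\neq z$ and applying \eqref{3.1} with $x=x_n$, $y=z$ (legitimate for large $n$, since the $x_n$ are distinct so $d(x_{n+1},Tz)>0$ eventually), the right-hand side $F(d(x_n,z))\to-\infty$ by (ii) forces $F(s\,d(x_{n+1},Tz))\to-\infty$, hence $d(x_{n+1},Tz)\to 0$; Lemma \ref{Lemma 2.3}(b) with $y=Tz$ then gives $\tfrac{1}{s}d(z,Tz)\le\liminf_n d(x_n,Tz)=0$, so $Tz=z$, a contradiction. Thus $z$ is a fixed point. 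For uniqueness, if $z,w$ are fixed points with $z\neq w$ then \eqref{3.1} gives $F(s\,d(z,w))+\phi(d(z,w))\le F(d(z,w))$, while $s>1$ and strict monotonicity give $F(s\,d(z,w))>F(d(z,w))$, forcing $\phi(d(z,w))<0$, which is impossible. Hence the fixed point is unique.
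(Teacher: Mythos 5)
Your skeleton (Picard iteration, the use of property (v) to get $d(x_n,x_{n+1})\to 0$ and $d(x_n,x_{n+2})\to 0$, the existence step via Lemma \ref{Lemma 2.3}, and the uniqueness argument) is sound, but the Cauchy step has a genuine gap, and it is exactly the step where this theorem differs from its companion, Theorem \ref{3.5}. You propose to assume $\{x_n\}$ is not Cauchy, invoke Lemma \ref{Lemma 2.4}, extract subsequences with $d(x_{m(k)},x_{n(k)})\to a$ and $d(x_{m(k)+1},x_{n(k)+1})\to b$, and then contradict
\begin{equation*}
F\bigl(s\,d(x_{m(k)+1},x_{n(k)+1})\bigr)+\phi\bigl(d(x_{m(k)},x_{n(k)})\bigr)\le F\bigl(d(x_{m(k)},x_{n(k)})\bigr)
\end{equation*}
using only monotonicity and one-sided limits of $F$. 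This cannot be made to work in the class $\mathbb{F}$, where $F$ is \emph{not} assumed continuous. Indeed, monotonicity and $\phi>0$ force $s\,d(x_{m(k)+1},x_{n(k)+1})<d(x_{m(k)},x_{n(k)})$, hence $sb\le a$, and your quadrilateral estimate gives $a\le sb$, so $a=sb$; but then a jump discontinuity of $F$ at the single point $a$ absorbs the $\phi$-deficit. For instance, for $F(t)=\ln t$ on $0<t<a$ and $F(t)=\ln t+J$ on $t\ge a$ (strictly increasing, satisfying (ii) and (iv)), the displayed inequality holds comfortably for all $k$ when $s\,d(x_{m(k)+1},x_{n(k)+1})\uparrow a$ and $d(x_{m(k)},x_{n(k)})\downarrow a$, since then $F\bigl(d(x_{m(k)},x_{n(k)})\bigr)-F\bigl(s\,d(x_{m(k)+1},x_{n(k)+1})\bigr)\ge J$. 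So no contradiction is available from (i)--(iii) alone; the limit-passage argument genuinely requires continuity of $F$, which is why the paper reserves Lemma \ref{Lemma 2.4} for Theorem \ref{3.5}, where $F\in\Im$. A secondary defect: your bound $\liminf_k\phi\bigl(d(x_{m(k)},x_{n(k)})\bigr)>0$ does not follow from (iii) either, since (iii) controls $\phi$ only along arguments tending to a limit \emph{from the right}, and nothing forces $d(x_{m(k)},x_{n(k)})\to a^{+}$.

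The paper's proof of this theorem never passes to a limit inside $F$; it is quantitative. From property (v) it gets not merely $d(x_n,x_{n+1})\to 0$ but $F\bigl(s^{n}d(x_n,x_{n+1})\bigr)\le F\bigl(d(x_0,x_1)\bigr)-(n-n_0)A$, hence $s^{n}d(x_n,x_{n+1})\to 0$ and likewise $s^{n}d(x_n,x_{n+2})\to 0$; then property (iv) --- which your proposal never uses, a telltale sign --- converts these into rates of the form $s^{n}d(x_n,x_{n+1})\le \bigl[(n-n_0)A\bigr]^{-1/k}$ for large $n$, and Cauchyness is proved directly by iterating the $b$-rectangular inequality along odd and even gaps and summing the resulting convergent series. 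Your final two steps are fine (your existence argument, pushing $F(d(x_n,z))\to-\infty$ through the contraction and then applying Lemma \ref{Lemma 2.3}, and your uniqueness argument via $F(s\,d(z,w))>F(d(z,w))$ are correct, the latter even slightly cleaner than the paper's), but as written the theorem is not proved: the Lemma \ref{Lemma 2.4} contradiction must be replaced by the summation argument, or else continuity of $F$ must be added to the hypotheses.
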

	\begin{proof}
		Let $x_{0}\in X$ be an arbitrary point in $ X $ and define a sequence $\left\lbrace x_{n}\right\rbrace$ by 
		$$x_{n+1} =Tx_{n}=T^{n+1}x_{0},$$
		for all $n\in \mathbb{N}.$ If there exists $n_0\in \mathbb{N}$ such that $d\left( x_{n_0},x_{n_0+1}\right) =0$, then proof is finished.
		
		We can suppose that $d\left( x_{n},x_{n+1}\right) >0$  for all $n\in \mathbb{N}.$
		
		Substituting $x=x_{n-1}$ and $y=x_{n}$, from $(\ref{3.1})$, for all $n\in  $ $\mathbb{N}$, we have
		\begin{equation}
			F \left[ d\left( x_{n},x_{n+1}\right)\right]\leq F \left[ s d\left( x_{n},x_{n+1}\right)\right]+\phi\left(d(x_{n-1},x_{n}) \right)  \leq  F \left(d\left(x_{n-1},x_{n}\right) \right),\forall n\in \mathbb{N}.
		\end{equation}
		Hence
		\begin{equation}\label{3.3}
			F\left( d\left( x_{n},x_{n+1}\right) \right) < F \left( d\left( x_{n-1},x_{n}\right) \right).
		\end{equation}
		By $(\ref{3.1})  $ and property $ (iv) $ of definition $ \ref{2.8} $, we get
		\begin{equation}
			\phi\left(d(x_{n-1},x_{n} )\right)+F\left( s^{n}d\left( x_{n},x_{n+1}\right) \right) \leq F \left( d\left( s^{n-1}x_{n-1},x_{n}\right) \right)\ for \ all \ n\in\mathbb{N}.
		\end{equation}
		Hence
		\begin{equation}
			F\left( s^{n}d\left( x_{n},x_{n+1}\right) \right) \leq F \left( d\left( s^{n-1}x_{n-1},x_{n}\right) \right)-\phi\left(d(x_{n-1},x_{n}) \right).
		\end{equation}
		Repeating this step, we conclude that 
		\begin{align*}
			F\left( s^{n}d\left( x_{n},x_{n+1}\right) \right)  &\leq F \left(s^{n-1} d\left( x_{n-1},x_{n}\right) \right)-\phi(d\left(x_{n-1},x_{n}\right))\\
			&\leq F \left(s^{n-2} d\left( x_{n-2},x_{n-1}\right) \right)-\phi(d\left(x_{n-1},x_{n}\right)) -\phi(d\left(x_{n-2},x_{n-1}\right)) \\
			&\leq ...\leq F\left( d\left( x_{0},x_{1}\right)\right)-\sum_{i=0}^{n}\phi(d\left(x_{i},x_{i+1}\right)). 
		\end{align*}
		Since $ \liminf _{\alpha \rightarrow s^{+} }\phi(\alpha)> 0 $, we have $ \liminf _{n\rightarrow \infty }\phi( d\left( x_{n-1},x_{n}\right))> 0 $, then from the definition of the limit, there exists $ n_0\in \mathbb{N}$  and $ A> 0 $ such that for all $n\geq n_{0}   $, $ \phi( d\left( x_{n-1},x_{n}\right))>A $. Thus
		\begin{align*}
			F\left(s^{n} d\left( x_{n},x_{n+1}\right) \right)&\leq F\left( d\left( x_{0},x_{1}\right)\right)-\sum_{i=0}^{n_{0}-1}\phi(d\left(x_{i},x_{i+1}\right))-\sum_{i=n_{0}-1}^{n}\phi(d\left(x_{i},x_{i+1}\right))\\
			&\leq F\left( d\left( x_{0},x_{1}\right)\right)-\sum_{i=n_{0}-1}^{n} A\\
			&=F\left( d\left( x_{0},x_{1}\right)\right)-(n-n_0)A,  
		\end{align*}
		for all  $n\geq n_{0} $.
		Taking limit as $ n\rightarrow\infty $ in above inequality we get
		\begin{equation}
			lim _{n \rightarrow\infty }F\left( s^{n}d\left( x_{n},x_{n+1}\right) \right)\leq  \lim _{n \rightarrow\infty }\left[ F\left( d\left( x_{0},x_{1}\right) \right)-(n-n_0)A\right],
		\end{equation}
		that is, $lim _{n \rightarrow\infty }F\left( s^{n}d\left( x_{n},x_{n+1}\right) \right)=-\infty  $ then from the condition $( ii )$ of Definition $ \ref{2.8} $, we conclude that 
		\begin{equation}\label{3.7}
			\lim_{n\rightarrow \infty }s^{n}d\left( x_{n,}x_{n+1}\right)=0.
		\end{equation}
		Next. We shall prove that 
		\begin{equation*}
			\lim_{n\rightarrow \infty }s^{n}d\left( x_{n},x_{n+2}\right) =0.
		\end{equation*}
		We assume that $x_{n}\neq x_{m}$ for every $n,m\in $ $\mathbb{N},\ n\neq m.$ Indeed, suppose that $x_{n}= x_{m}$ for some $  n=m+k$  with $ k>0 $ and using $(\ref{3.3})$, we have  
		\begin{equation}
			d\left( x_{m},x_{m+1}\right) =  d\left( x_{n},x_{n+1}\right) <d\left( x_{n-1},x_{n}\right).
		\end{equation}
		Continuing this process, we can that
		\begin{equation*} 
			d\left( x_{m},x_{n+1}\right)= d\left( x_{n},x_{n+1}\right)<d\left( x_{m},x_{m+1}\right).
		\end{equation*}
		It is a contradiction. Therefore, $d\left( x_{n},x_{m}\right)>0  $ for every $n,m\in $ $\mathbb{N}$, $n\neq m .$
		
		By $(\ref{3.1})  $ and property $ (iv) $ of Definition $ \ref{2.8} $, we get
		\begin{equation}
			\phi\left(d(x_{n-1},x_{n+1}) \right)+F\left( s^{n}d\left( x_{n},x_{n+2}\right) \right) \leq F \left( d\left( s^{n-1}x_{n-1},x_{n+1}\right) \right)\ for \ all \ n\in\mathbb{N}.
		\end{equation}
		Hence
		\begin{equation}
			F\left( s^{n}d\left( x_{n},x_{n+2}\right) \right) \leq F \left( d\left( s^{n-1}x_{n-1},x_{n+1}\right) \right)-\phi\left(x_{n-1},x_{n+1} \right).
		\end{equation}
		Repeating this step, we conclude that 
		\begin{align*}
			F\left( s^{n}d\left( x_{n},x_{n+2}\right) \right)  &\leq F \left(s^{n-1} d\left( x_{n-1},x_{n+1}\right) \right)-\phi(d\left(x_{n-1},x_{n+1}\right))\\
			&\leq F \left(s^{n-2} d\left( x_{n-2},x_{n}\right) \right)-\phi(d\left(x_{n-1},x_{n+1}\right)) -\phi(d\left(x_{n-2},x_{n}\right)) \\
			&\leq ...\leq F\left( d\left( x_{0},x_{2}\right)\right)-\sum_{i=0}^{n}\phi(d\left(x_{i},x_{i+2}\right)). 
		\end{align*}
		Since $ \liminf _{\alpha \rightarrow s^{+} }\phi(\alpha)> 0 $, we have $ \liminf _{n\rightarrow \infty }\phi( d\left( x_{n-1},x_{n+1}\right))> 0 $, then from the definition of the limit, there exists $ n_1\in \mathbb{N}$  and $ B> 0 $ such that for all $n\geq n_{1}   $, $ \phi( d\left( x_{n-1},x_{n+1}\right))>B $. Thus
		\begin{align*}
			F\left(s^{n} d\left( x_{n},x_{n+2}\right) \right)&\leq F\left( d\left( x_{0},x_{2}\right)\right)-\sum_{i=0}^{n_{1}-1}\phi(d\left(x_{i},x_{i+2}\right))-\sum_{i=n_{1}-1}^{n}\phi(d\left(x_{i},x_{i+2}\right))\\
			&\leq F\left( d\left( x_{0},x_{2}\right)\right)-\sum_{i=n_{1}-1}^{n} B\\
			&=F\left( d\left( x_{0},x_{2}\right)\right)-(n-n_1)B,  
		\end{align*}
		for all  $n\geq n_{1} $.
		Taking limit as $ n\rightarrow\infty $ in above inequality we get
		\begin{equation}
			lim _{n \rightarrow\infty }F\left( s^{n}d\left( x_{n},x_{n+2}\right) \right)\leq  \lim _{n \rightarrow\infty }\left[ F\left( d\left( x_{0},x_{2}\right) \right)-(n-n_0)B\right],
		\end{equation}
		that is, $lim _{n \rightarrow\infty }F\left( s^{n}d\left( x_{n},x_{n+2}\right) \right)=-\infty  $ then from the condition $( ii )$ of Definition $ (\ref{2.8}) $, we conclude that 
		\begin{equation}\label{3.12}
			\lim_{n\rightarrow \infty }s^{n}d\left( x_{n,}x_{n+2}\right)=0.
		\end{equation}
		Next, We shall prove that $\left\lbrace  x_{n}\right\rbrace  _{n\in \mathbb{N}}$ is a Cauchy sequence, i.e, $\lim_{n,m\rightarrow \infty }d\left( x_{n,}x_{m}\right) =0,$ for all $n,m\in \mathbb{N}$.
		Now, from $\left(iv\right)$ of Definition $ \ref{2.8} $, there exists $k\in \left] 0,1\right[ $ such that
		\begin{equation}
			\lim_{n\rightarrow \infty }\left[s^{n} d\left(x_{n},x_{n+1}\right) \right]^{k}F\left(s^{n} d\left(x_{n},x_{n+1}\right) \right) =0.
		\end{equation} 
		Since
		\begin{equation*}
			F\left[ s^{n}d\left(x_{n},x_{n+1}\right)\right] \leq F\left[ d\left(x_{0},x_{1}\right) \right] -(n-n_0)A,
		\end{equation*}
		we have
		\begin{align*}
			\left[ s^{n}d\left(x_{n},x_{n+1}\right) \right] ^{k}F\left[ s^{n} d\left(x_{n},x_{n+1}\right) \right]
			&\leq \left[ s^{n}d\left( x_{n},x_{n+1}\right)\right] ^{k}F\left[ d\left( x_{0},x_{1}\right) \right] -\left[ (n-n_0)A\right]  \left[ s^{n}d\left( x_{n},x_{n+1}\right) \right] ^{k}
		\end{align*}
		Therefore,
		\begin{align*}
			\left[s^{n}d\left( x_{n},x_{n+1}\right) \right] ^{k}F\left[ s^{n}d\left(x_{n},x_{n+1}\right)  \right]
			&-\left[ s^{n}d\left( x_{n},x_{n+1}\right)  \right] ^{k}F\left[  d\left(x_{0},x_{1}\right) \right]\\
			&\leq -\left[ (n-n_0)A\right] \left[ s^{n}d\left( x_{n},x_{n+1}\right) \right] ^{k}
			\leq 0.
		\end{align*}
		Taking limit $ n\rightarrow\infty $ in above inequality, we conclude that
		$$lim_{n\rightarrow \infty }s^{n}d\left( x_{n,}x_{n+1}\right)^{k}(n-n_0)A=0.  $$
		Then there exists $ h\in\mathbb{N} $, such that ,
		\begin{equation}\label{3.14}
			s^{n}d\left(x_{n,}x_{n+1}\right)\leq \frac{1}{\left[ (n-n_0)A\right] ^{k}}\ for\  all \ n\geq h. 
		\end{equation} 
		Now, from $\left(iv\right)$ of Definition $ \ref{2.8} $, there exists $k\in \left] 0,1\right[ $ such that
		\begin{equation}
			\lim_{n\rightarrow \infty }\left[s^{n} d\left(x_{n},x_{n+2}\right) \right]^{k}F\left(s^{n} d\left(x_{n},x_{n+2}\right) \right) =0.
		\end{equation} 
		Since
		\begin{equation*}
			F\left[ s^{n}d\left(x_{n},x_{n+2}\right)\right] \leq F\left[ d\left(x_{0},x_{2}\right) \right] -(n-n_1)B,
		\end{equation*}
		we have
		\begin{align*}
			\left[ s^{n}d\left(x_{n},x_{n+2}\right) \right] ^{k}F\left[ s^{n} d\left(x_{n},x_{n+2}\right) \right]
			&\leq \left[ s^{n}d\left( x_{n},x_{n+2}\right)\right] ^{k}F\left[ d\left( x_{0},x_{2}\right) \right] -\left[ (n-n_1)B\right]  \left[ s^{n}d\left( x_{n},x_{n+2}\right) \right] ^{k}
		\end{align*}
		Therefore,
		\begin{align*}
			\left[s^{n}d\left( x_{n},x_{n+2}\right) \right] ^{k}F\left[ s^{n}d\left(x_{n},x_{n+2}\right)  \right]
			&-\left[ s^{n}d\left( x_{n},x_{n+2}\right)  \right] ^{k}F\left[  d\left(x_{0},x_{2}\right) \right]\\
			&\leq -\left[ (n-n_1)B\right] \left[ s^{n}d\left( x_{n},x_{n+2}\right) \right] ^{k}
			\leq 0.
		\end{align*}
		Taking limit $ n\rightarrow\infty $ in above inequality, we conclude that
		$$lim_{n\rightarrow \infty }s^{n}d\left( x_{n,}x_{n+2}\right)^{k}(n-n_1)B=0  $$
		Then there exists $ l\in\mathbb{N} $, such that,
		\begin{equation}\label{3.16}
			s^{n}d\left(x_{n,}x_{n+2}\right)\leq \frac{1}{\left[ (n-n_1)B\right] ^{k}}\  for\ all \ n\geq l. 
		\end{equation}
		Next, we show that  $\lbrace x_{n}\rbrace _{n\in \mathbb{N}}$ is a Cauchy sequence, i.e,
		\begin{equation*}
			\lim_{n\rightarrow \infty }d\left( x_{n},x_{n+r}\right) =0 \ for \ all \ r\in \mathbb{N^{*}}.
		\end{equation*}
		The cases $r=1$ and $\ r=2,$ are proved, respectively by $(\ref{3.7})
		$ and $(\ref{3.12}). $
		
		Now, we take $r\geq 3$. It is sufficient to examine two cases:\\
		Case $(I)$: Suppose that $\ r=2m+1$ where $m\geq 1$.By using the quadrilateral inequality together we have
		\begin{align*}
			d\left( x_{n},x_{n+r}\right) &=d\left( x_{n},x_{n+2m+1}\right)\\
			& \leq s\left[ d\left(x_{n},x_{n+1}\right) +d\left( x_{n+1},x_{n+2}\right) +\left( x_{n+2},x_{n+2m+1}\right)\right]\\
			&\leq s\left[ d\left(x_{n},x_{n+1}\right) +d\left( x_{n+1},x_{n+2}\right)\right]  +s^{2}\left[ \left( x_{n+2},x_{n+3}\right) +\left( x_{n+3},x_{n+4}\right)\right] \\
			&+...+s^{m} d\left( x_{n+2m},x_{n+2m+1}\right)\\ 
			&=\frac{1}{s^{n-1}}\left[  s^{n}d\left(x_{n},x_{n+1}\right) +s^{n} d\left( x_{n+1},x_{n+2}\right)\right]+\frac{1}{s^{n-1}}\left[s^{n+1}d\left(x_{n+2},x_{n+3}\right) +s^{n+1} d\left( x_{n+3},x_{n+4}\right)  \right]\\
			&+...+\frac{1}{s^{n-1}}\left[s^{m+n-1}d\left(x_{n+2m},x_{n+2m-1}\right)  \right]\\
			&=\frac{1}{s^{n-1}}\left[ s^{n}d\left(x_{n},x_{n+1}\right) +s^{n+1}d\left(x_{n+2},x_{n+3}\right)+... +s^{m} d\left( x_{n+2m-2},x_{n+n+2m-1}\right)\right] \\
			&+ \frac{1}{s^{n-1}}\left[ s^{n}d\left(x_{n+1},x_{n+2}\right) +s^{n+1}d\left(x_{n+3},x_{n+4}\right)+...+ s^{m} d\left( x_{n+2m-1},x_{n+n+2m}\right)\right]\\
			&+\frac{1}{s^{n-1}}\left[  s^{m}d\left(x_{n+2m},x_{n+2m+1}\right)\right]\\ 
			&\leq \frac{1}{s^{n-1}}\left[ s^{n}d\left(x_{n},x_{n+1}\right) +s^{n+2}d\left(x_{n+2},x_{n+3}\right)+... +s^{n+2m-2} d\left( x_{n+2m-2},x_{n+2m-1}\right)\right] \\
			&+ \frac{1}{s^{n-1}}\left[ s^{n+1}d\left(x_{n+1},x_{n+2}\right) +s^{n+3}d\left(x_{n+3},x_{n+4}\right)+...+ s^{n+2m-1} d\left( x_{n+2m-1},x_{n+n+2m}\right)\right]\\
			&+s^{m+2m} \left[ d\left( x_{n+2m},x_{n+n+2m-1}\right)\right] \\
			&= \frac{1}{s^{n-1}}\sum_{i=n}^{i=n+2m}s^{i}d\left( x_{i},x_{i+1}\right)\\
			&= \frac{1}{s^{n-1}}\sum_{i=n}^{i=n+r-1}s^{i}d\left( x_{i},x_{i+1}\right).
		\end{align*}
		Hence, for all $n\geq  \max\lbrace n_{0},n_{h} \rbrace  $ and $ r\in\mathbb{N^{*}}$ inequality $ (\ref{3.14}) $ implies
		\begin{equation*}
		d\left( x_{n},x_{n+r}\right)\leq \frac{1}{s^{n-1}}\sum_{i=n}^{i=n+r-1}s^{i}d\left( x_{i},x_{i+1}\right)\leq \frac{1}{s^{n-1}}\sum_{i=n}^{i=\infty}s^{i}d\left( x_{i},x_{i+1}\right)\leq  \frac{1}{s^{n-1}}\sum_{i=n}^{i=\infty}\frac{1}{\left[ (i-n_0)A\right] ^{k}}\rightarrow 0.  
		\end{equation*}
		Case $(II)$: Suppose that $\ r=2m$ where $m\geq 1$. By using the
		quadrilateral inequality together we have
		\begin{align*}
			d\left( x_{n},x_{n+r}\right) &=d\left( x_{n},x_{n+2m}\right)\\
			& \leq s\left[ d\left(x_{n},x_{n+2}\right) +d\left( x_{n+2},x_{n+3}\right) +\left( x_{n+3},x_{n+2m}\right)\right]\\
			&\leq s\left[ d\left(x_{n},x_{n+2}\right) +d\left( x_{n+2},x_{n+3}\right)\right]  +s^{2}\left[ \left( x_{n+3},x_{n+4}\right) +\left( x_{n+4},x_{n+5}\right)\right] \\
			&+...+s^{m-1} d\left( x_{n+2m-3},x_{n+n+2m-2}\right)+s^{m-1} d\left( x_{n+2m-2},x_{n+n+2m-1}\right)+s^{m-1} d\left( x_{n+2m-1},x_{n+n+2m}\right)\\ 
			&=\frac{1}{s^{n-1}}\left[  s^{n}d\left(x_{n},x_{n+2}\right) +s^{n} d\left( x_{n+2},x_{n+3}\right)\right]+\frac{1}{s^{n-1}}\left[s^{n+1}d\left(x_{n+3},x_{n+4}\right) +s^{n+1} d\left( x_{n+4},x_{n+5}\right)  \right]\\
			&+...+\frac{1}{s^{n-1}}\left[s^{m+n-2}d\left(x_{n+2m-3},x_{n+2m-2}\right)  \right]+\frac{1}{s^{n-1}}\left[s^{m+n-2}d\left(x_{n+2m-2},x_{n+2m-1}\right)\right]\\
			&+\frac{1}{s^{n-1}}\left[s^{m+n-2}d\left(x_{n+2m-1},x_{n+2m}\right)  \right]\\
			&=\frac{1}{s^{n-1}}\left[  s^{n}d\left(x_{n},x_{n+2}\right) +s^{n} d\left( x_{n+2},x_{n+3}\right)\right]\\
			&+\frac{1}{s^{n-1}}\left[ s^{n+1}d\left(x_{n+3},x_{n+4}\right) +s^{n+2}d\left(x_{n+5},x_{n+6}\right)+... +s^{m-1} d\left( x_{n+2m-3},x_{n+2m-2}\right)\right] \\
			&+ \frac{1}{s^{n-1}}\left[ s^{n+1}d\left(x_{n+4},x_{n+5}\right) +s^{n+2}d\left(x_{n+6},x_{n+7}\right)+...+ s^{m-2} d\left( x_{n+2m-2},x_{n+n+2m-1}\right)\right]\\
			&+\frac{1}{s^{n-1}}\left[ s^{m-1}d\left(x_{n+2m-1},x_{n+2m}\right)\right] \\
			&\leq\frac{1}{s^{n-1}}  s^{n}d\left(x_{n},x_{n+2}\right) +\frac{1}{s^{n-1}}s^{n+2} d\left( x_{n+2},x_{n+3}\right)\\
			&+\frac{1}{s^{n-1}}\left[ s^{n+3}d\left(x_{n+3},x_{n+4}\right) +s^{n+5}d\left(x_{n+5},x_{n+6}\right)+... +s^{n+2m-3} d\left( x_{n+2m-3},x_{n+2m-2}\right)\right] \\
			&+ \frac{1}{s^{n-1}}\left[ s^{n+4}d\left(x_{n+4},x_{n+5}\right) +s^{n+6}d\left(x_{n+6},x_{n+7}\right)+...+ s^{n+2m-2} d\left( x_{n+2m-2},x_{n+n+2m-1}\right)\right]\\
			&+\frac{1}{s^{n-1}}\left[ s^{n+2m-1}d\left(x_{n+2m-1},x_{n+2m}\right)\right] \\
			&= \frac{1}{s^{n-1}}  s^{n}d\left(x_{n},x_{n+2}\right)+ \frac{1}{s^{n-1}}\sum_{i=n+2}^{i=n+2m-1}s^{i}d\left( x_{i},x_{i+1}\right)\\
			&= \frac{1}{s^{n-1}}  s^{n}d\left(x_{n},x_{n+2}\right)+\frac{1}{s^{n-1}}\sum_{i=n+2}^{i=n+r-1}s^{i}d\left( x_{i},x_{i+1}\right).
		\end{align*}
		Hence, for all $n\geq  \max\lbrace n_{0},n_{0},n_{1},n_{l} \rbrace  $ and $ r\in\mathbb{N^{*}}$ inequality $ (\ref{3.14}) $ and  $ ( \ref{3.16}) $ implies
		\begin{align*}
			\frac{1}{s^{n-1}}s^{n}d\left(x_{n},x_{n+2}\right)+ \frac{1}{s^{n-1}}\sum_{i=n+2}^{i=n+r-1}s^{i}d\left( x_{i},x_{i+1}\right)&\leq \frac{1}{s^{n-1}}s^{n}d\left(x_{n},x_{n+2}\right)+ \frac{1}{s^{n-1}}\sum_{i=n+2}^{i=\infty}s^{i}d\left( x_{i},x_{i+1}\right)\\
			&\leq \left[ \frac{1}{s^{n-1}}\frac{1}{\left[ (n-n_1)B\right] ^{k}}+ \frac{1}{s^{n-1}}\sum_{i=n+2}^{i=\infty}\frac{1}{\left[ (i-n_0)A\right] ^{k}}\right] \rightarrow 0. 
		\end{align*}
		Thus
		\begin{equation*}
			\lim_{n\rightarrow \infty }d\left( x_{n},x_{n+r}\right) =0.
		\end{equation*}
		Consequently, $\lbrace x_{n}\rbrace _{n\in \mathbb{N}}$ is a Cauchy sequence in $ X $. By completeness of $\left( X,d\right) ,$ there exists $z\in X$ such that 
		\begin{equation*}
			\lim_{n\rightarrow \infty }d\left( x_{n},z\right)  =0.
		\end{equation*}
		Now, we show that $d\left( Tz,z\right) =0$  arguing by contradiction, we assume that
		\begin{equation*}
			d\left( Tz,z\right)>0.
		\end{equation*}
		On the other hand, since $ \phi(x,y) + F(d(Tx, Ty)) \leq \phi(d(x,y)) + F(d(Tx, Ty))\leq F(d(x, y))  $
		holds for all such $x, y \in X  $ for which $ d(Tx, Ty) > 0 $, and because F is increasing,
		$ d(Tx, Ty)\leq d(x, y)  $ for all $ x,y\in X .$
		This implies
		$$d\left( Tx_{n},Tz\right) \leq d\left( x_{n},z\right) \ for \ all\  n\in\mathbb{N}.$$  
		Since $ x_{n}\rightarrow z $ as $ n\rightarrow \infty $ for all $ n\in \mathbf{N} $, then from Lemma $ \ref{Lemma 2.3} $, we conclude that
		\begin{equation*}
			\frac{1}{s}d\left( z,Tz\right)\leq \lim_{n\rightarrow \infty }\sup d\left(Tx_{n},Tz\right) \leq sd\left( z,Tz\right).
		\end{equation*}
		Hence
		\begin{equation*}
			\frac{1}{s}d\left( z,Tz\right)\leq \lim_{n\rightarrow \infty }\sup d\left(Tx_{n},Tz\right) \leq\lim_{n\rightarrow \infty }\sup d\left(x_{n},z\right)=0. 
		\end{equation*}
		Hence $Tz=z$.
		
		Uniqueness. Now, suppose that $z,u\in X$ are two fixed points of $T$ such that $u\neq z$. Therefore, we have
		\begin{equation*}
			d\left( z,u\right) =d\left( Tz,Tu\right)  >0.
		\end{equation*}
		Applying $( \ref{3.1}) $ with $ x=z $ and $ y=u $, we have
		\begin{align*}
			F \left( d\left( z,u\right)\right)&=F \left( d\left( Tu,Tz\right)\right)\\
			&\leq F \left(s d\left( Tu,Tz\right)\right)\\
			& \leq F\left( d\left( z,u\right)\right)-\phi\left(d(z,u )\right)\\
			&<F\left( d\left( z,u\right)\right). 
		\end{align*}
		Which implies that
		\begin{equation*}
			d\left( z,u\right) <d\left( z,u\right).
		\end{equation*}
		It is a contradiction. Therefore $u=z$.
	\end{proof}
	\begin{corollary}
		Let $d\left( X,d\right) $ be a complete b-rectangular metric space with parameter $s>1$ and let $T$ be a self mapping on $\ X$. If for all $x,y\in X$ we have 
		\begin{equation*}
			d\left( Tx,Ty\right) >0\Rightarrow  sd\left( Tx,Ty\right) \leq  e^{\frac{-1}{1+d\left( x,y\right)} }d\left( x,y\right) 
		\end{equation*}
		Then $T$ has a unique fixed point.
	\end{corollary}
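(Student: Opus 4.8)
The plan is to obtain Corollary~3.3 directly from Theorem~3.2 by exhibiting an explicit admissible pair $(F,\phi)$ for which the displayed hypothesis coincides with the contraction inequality \eqref{3.1}. The natural choice is $F(t)=\ln t$ and $\phi(t)=\dfrac{1}{1+t}$. With this choice there is no new fixed-point analysis to carry out: the whole argument reduces to a single monotone transformation of the given inequality, and the existence and uniqueness of the fixed point are inherited from the theorem.

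First I would apply the logarithm to the hypothesis. Since $\ln$ is strictly increasing, for every $x,y\in X$ with $d(Tx,Ty)>0$ the inequality $s\,d(Tx,Ty)\le e^{-1/(1+d(x,y))}\,d(x,y)$ is equivalent to
\[
\ln\bigl(s\,d(Tx,Ty)\bigr)\le -\frac{1}{1+d(x,y)}+\ln\bigl(d(x,y)\bigr),
\]
that is, to
\[
F\bigl(s\,d(Tx,Ty)\bigr)+\phi\bigl(d(x,y)\bigr)\le F\bigl(d(x,y)\bigr).
\]
This is precisely condition \eqref{3.1}, so once the pair $(F,\phi)$ is shown to be admissible the conclusion follows at once from Theorem~3.2.

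The remaining task is to verify that $F\in\mathbb{F}$ and $\phi\in\Phi$ in the sense of Definition~\ref{2.8}. Conditions (i) and (ii) are immediate for $F=\ln$ (strict monotonicity, and $\ln x_n\to-\infty$ exactly when $x_n\to0$), while (iv) holds with any $k\in\,]0,1[$ because $x^{k}\ln x\to0$ as $x\to0^{+}$; for $\phi(t)=\dfrac{1}{1+t}$ condition (iii) is clear since $\liminf_{s\to\alpha^{+}}\dfrac{1}{1+s}=\dfrac{1}{1+\alpha}>0$. The one step that looks delicate, and which I expect to be the main obstacle, is the compatibility condition (v); here the key observation is that the logarithm converts the factor $s^{n}$ into an additive constant, $F(s^{n}t)=n\ln s+F(t)$. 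Consequently the hypothesis $\phi(\alpha_n)+F(s\alpha_{n+1})\le F(\alpha_n)$ reads $\phi(\alpha_n)+\ln s+\ln\alpha_{n+1}\le\ln\alpha_n$, while the desired conclusion $\phi(\alpha_n)+F(s^{n}\alpha_{n+1})\le F(s^{n-1}\alpha_n)$ reads $\phi(\alpha_n)+n\ln s+\ln\alpha_{n+1}\le(n-1)\ln s+\ln\alpha_n$, and the two collapse to the same inequality. Thus (v) holds automatically for this choice of $F$, so the pair $(F,\phi)$ is admissible and Theorem~3.2 yields a unique fixed point of $T$.
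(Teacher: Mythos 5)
Your proposal is correct and follows essentially the same route as the paper: take logarithms of the hypothesis and recognize the result as condition \eqref{3.1} with $F(t)=\ln t$ and $\phi(t)=\frac{1}{1+t}$, then invoke Theorem~3.2. The only difference is that you also verify the admissibility of the pair $(F,\phi)$ under Definition~\ref{2.8} (in particular condition (v), which indeed collapses to a tautology since $F(s^{n}t)=n\ln s+F(t)$), a check the paper's proof leaves implicit.
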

	\begin{proof}
		Since $ d\left( Tx,Ty\right) >0 $ then we can take natural logarithm sides and get
		\begin{align*}
			ln(sd\left( Tx,Ty\right))& \leq  ln\left[ e^{\frac{-1}{1+d\left( x,y\right)} }d\left( x,y\right)\right] \\
			&= \frac{-1}{1+d\left( x,y\right)}+ ln\left[ d\left( x,y\right)\right]. 
		\end{align*}
		Hence
		\begin{equation*}
			F\left[ sd\left( Tx,Ty\right)\right] +\phi\left(d( x,y)\right)  \leq F\left( d\left( x,y\right)\right) 
		\end{equation*}
		with $ F(t)=ln(t) $ and $ \phi(t)=\frac{1}{1+t} .$ 
	\end{proof}
	\begin{example}
		Let $ X=A\cup B $, where $ A=\lbrace 0,\frac{1}{2},\frac{1}{3},\frac{1}{4}\rbrace $ and $ B=\left[1,2 \right]  $.
		
		Define $ d:X\times X\rightarrow \left[0,+\infty \right[  $ as follows:
		\begin{equation*}
			\left\lbrace
			\begin{aligned}
				d(x, y) &=d(y, x)\ for \ all \  x,y\in X;\\
				d(x, y) &=0\Leftrightarrow y= x.	
			\end{aligned}
			\right.
		\end{equation*}
		and
		\begin{equation*}
			\left\lbrace
			\begin{aligned}		    
				d\left( 0,\frac{1}{2}\right) =d\left( \frac{1}{2},\frac{1}{3}\right)=0,16\\
				d\left(0,\frac{1}{3}\right) =d\left( \frac{1}{3},\frac{1}{4}\right)	=0,04\\
				d\left(0,\frac{1}{4}\right) =d\left( \frac{1}{2},\frac{1}{4}\right)	=0,25\\
				d\left( x,y\right) =\left( \vert x-y\vert\right) ^{2} \ otherwise.
			\end{aligned}
			\right.
		\end{equation*}
		Then $ (X,d) $ is a b-rectangular metric space with coefficient s=3. However we have the following:
		\item[1)] $ (X,d) $ is not a  metric space, as $d\left( 0,\frac{1}{4}\right)=0.25>0.08=d\left(0,\frac{1}{3}\right)+d\left( \frac{1}{3},\frac{1}{4}\right)$.  
		\item[2)] $ (X,d) $ is not a rectangular metric space, as $d\left( \frac{1}{2},\frac{1}{4}\right)=0.25>0.24=d\left( \frac{1}{2},0\right)+d\left( 0,\frac{1}{3}\right)+d\left( \frac{1}{3},\frac{1}{4}\right)$.
		
		Define mapping $T:X\rightarrow X$ by
		\begin{equation*}
			T(x)=\left\lbrace
			\begin{aligned}
				x^{\frac{1}{4}}	& \ if \ x\in \left[1,2 \right]\\
				1&  \ if \ x\in A.
			\end{aligned}
			\right.
		\end{equation*}
		Evidently, $ T(x)\in X $. Let $F( t) =ln(t)+\sqrt{t},$ $\phi (t)=\frac{1}{1+t}$. It obvious that $F \in \mathbb{F}$ and $\phi\in\Phi .$
		
		Consider the following possibilities:
		
		case $ 1 $ : $ x,y \in  \left[1,2 \right] \  x\neq y$. Then
		$$ T(x)=x^{\frac{1}{4}}	,\   T(y)=y^{\frac{1}{4}}	,\ d\left( Tx,Ty \right)=\left( x^{\frac{1}{4}}-y^{\frac{1}{4}}\right) ^{2},\ d(x,y)=(x-y)^{2}.  $$
		On the other hand 
		$$ F \left[sd\left(Tx,Ty\right) \right]=ln(3(x^{\frac{1}{4}}-y^{\frac{1}{4}})^{2})+ \sqrt{3}(x^{\frac{1}{4}}-y^{\frac{1}{4}}),   $$
		$$ F \left[d\left(x,y\right)\right]=ln((x-y)^{2})+ (x-y) $$
		and
		$$ \phi \left[d\left(x,y\right)\right]=\frac{1}{\left[ 1+(x-y)^{2} \right]}  .$$
		We have 
		\begin{align*}
			F \left[sd\left(Tx,Ty\right) \right]+\phi \left[d\left(x,y\right)\right]-F \left[d\left(x,y\right)\right]&=ln(3(x^{\frac{1}{4}}-y^{\frac{1}{4}})^{2})-ln((x-y)^{2})+\sqrt{3}(x^{\frac{1}{4}}-y^{\frac{1}{4}}) -(x-y)+ \frac{1}{\left[ 1+(x-y)^{2} \right]}\\
			&=ln(3(x^{\frac{1}{4}}-y^{\frac{1}{4}})^{2})-ln((x-y)^{2})+ \sqrt{3}(x^{\frac{1}{4}}-y^{\frac{1}{4}})- (x-y)+\frac{1}{\left[ 1+(x-y)^{2} \right]} \\
			&=-2ln\left( \frac{x^{\frac{1}{4}} +y^{\frac{1}{4}}}{\sqrt{3}}\right)-2ln\left(x^{\frac{1}{2}}+y^{\frac{1}{2}} \right)\\
			& +(x^{\frac{1}{4}}-y^{\frac{1}{4}})\left( \sqrt{3}-\left( x^{\frac{1}{4}}+y^{\frac{1}{4}}\right)\left( x^{\frac{1}{2}}+y^{\frac{1}{2}}\right) \right) +\frac{1}{\left[ 1+(x-y)^{2} \right]}. 
		\end{align*}
		Since  $ x,y \in  \left[1,2 \right]  $, then
		$$  \frac{ x^{\frac{1}{4}}+ y^{\frac{1}{4}}}{\sqrt{3}}\geq 1\Rightarrow -2ln\left( \frac{ x^{\frac{1}{4}}+ x^{\frac{1}{4}}}{\sqrt{3}}\right)\leq 0  ,$$
		$$  (x^{\frac{1}{4}}-y^{\frac{1}{4}})\left( \sqrt{3}-\left( x^{\frac{1}{4}}+y^{\frac{1}{4}}\right)\left( x^{\frac{1}{2}}+y^{\frac{1}{2}}\right) \right)\leq0  $$
		and
		$$-2ln\left(x^{\frac{1}{2}}+y^{\frac{1}{2}} \right) +\frac{1}{\left[ 1+\left(  x^{\frac{1}{4}}-y^{\frac{1}{4}}\right) ^{2}\right] }\leq -2ln\left(x^{\frac{1}{2}}+y^{\frac{1}{2}} \right) +1\leq0.     $$ 
		Thus, for all $ x,y\in \left[1,2 \right] $ with $ x\neq y $, we have
		$$  F \left[sd\left(Tx,Ty\right) \right]+\phi \left[d\left(x,y\right)\right]\leq F \left[d\left(x,y\right)\right] $$
		case $ 2 $ : $ x \in  \left[1,2 \right]  $, $ y\in A $. Then
		$$ T(x)=x^{\frac{1}{4}}	,\   T(y)=1	,\ d\left( Tx,Ty \right)=\left( x^{\frac{1}{4}}-1\right) ^{2},\ d(x,y)=(x-y)^{2}.  $$
		On the other hand 
		$$ F \left[sd\left(Tx,Ty\right) \right]=ln(3(x^{\frac{1}{4}}-1)^{2})+ \sqrt{3}(x^{\frac{1}{4}}-1),   $$
		$$ F \left[d\left(x,y\right)\right]=ln((x-y)^{2})+ (x-y) $$
		and
		$$ \phi \left[d\left(x,y\right)\right]=\frac{1}{\left[ 1+(x-y)^{2}\right] }.$$
		We have 
		\begin{align*}
			F \left[d\left(x,y\right)\right]- F \left[sd\left(Tx,Ty\right) \right]-\phi \left[d\left(x,y\right)\right]&=  (x-y) -\sqrt{3}(x^{\frac{1}{4}}-1)+ln((x-y)^{2})-ln\left[ 3(x^{\frac{1}{4}}-1)^{2}\right]\\
			& -\frac{1}{\left[ 1+(x-y)^{2}\right] }\\
			&=2ln\left[ \frac{x-y }{\sqrt{3}(x^{\frac{1}{4}}-1)}\right] +(x-y)-\sqrt{3}(x^{\frac{1}{4}}-1) -\frac{1}{\left[ 1+(x-y)^{2}\right] }. 
		\end{align*}
		Since  $ x \in  \left[1,2 \right]  $ and $ y\in A $, then
		$$(x-y)^{2}\geq \left( x-\frac{1}{2}\right) ^{2}=\left( x-1+\frac{1}{2}\right) ^{2}>\left( x-1\right) ^{2}  .$$
		Hence 
		$$(x-y)>\left( x-1\right)=\left( x^{\frac{1}{4}}-1 \right)\left( x^{\frac{1}{4}}+1 \right) \left( x^{\frac{1}{2}}+1 \right),   $$
		$$(x-y) -\sqrt{3}\left( x^{\frac{1}{4}}-1 \right)> \left( x^{\frac{1}{4}}-1 \right)\left[ \left( x^{\frac{1}{4}}+1 \right) \left( x^{\frac{1}{2}}+1 \right))-\sqrt{3}\right]    $$
		and
		$$\frac{(x-y)}{\sqrt{3}\left( x^{\frac{1}{4}}-1 \right)}>\frac{\left( x^{\frac{1}{4}}+1 \right) \left( x^{\frac{1}{2}}+1 \right)}{\sqrt{3}}. $$
		Then we have 
		$$2ln\left[ \frac{x-y }{\sqrt{3}(x^{\frac{1}{4}}-1)}\right]> 2 ln\left[ \frac{\left( x^{\frac{1}{4}}+1 \right) \left( x^{\frac{1}{2}}+1 \right) }{\sqrt{3}}\right] =2 ln\left[ \frac{\left( x^{\frac{1}{4}}+1 \right)  }{\sqrt{3}}\right]+2 ln\left[  \left( x^{\frac{1}{2}}+1 \right)\right].$$
		Since $ x\in\left[1.2 \right]  $, then 
		$$ 2 ln\left[ \frac{\left( x^{\frac{1}{4}}+1 \right)  }{\sqrt{3}}\right]\geq 0 \ and \ 2 ln\left[  \left( x^{\frac{1}{2}}+1 \right)\right]\geq 1.$$
		On the other hand 
		$$\frac{1}{\left[ 1+(x-y)^{2}\right] }\leq 1 .$$
		Thus, for all $ x\in \left[1,2 \right] $ and $ y\in A  $, we have
		$$  F \left[sd\left(Tx,Ty\right) \right]+\phi \left[d\left(x,y\right)\right]\leq F \left[d\left(x,y\right)\right] $$
		Hence, the condition $ (\ref{3.1}) $ is satisfied. Therefore, $ T $ has a unique fixed point $ z=1 $.  
	\end{example}
	\begin{theorem}\label{3.5}
		Let $\left( X,d\right) $ be a complete b-rectangular metric space and let $T:X\rightarrow X$ be an $F -\phi-$contraction of type $ \left( \Im\right)  $-contraction, i.e, there exist $ F \in\Im $ and $\phi $ such that for any $ x,y \in X $, we have
		\begin{equation}\label{3.17}
			d\left( Tx,Ty\right) >0\Rightarrow F \left[s^{2}d\left( Tx,Ty\right)\right]+\phi(d(x,y))  \leq  F \left[ M\left( x,y\right) \right].
		\end{equation}
		Then $T$ has a unique fixed point.
	\end{theorem}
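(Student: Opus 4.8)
The plan is to mimic the Picard-iteration scheme used for the type-$(\mathbb F)$ theorem, but since here $F\in\Im$ provides only continuity (condition (iv) of Definition \ref{2.10}) instead of the power/condition-(v) structure of $\mathbb F$, the Cauchy property must be extracted through Lemma \ref{Lemma 2.4} together with continuity of $F$, rather than through summable $s^{n}$-estimates. First I would fix $x_0\in X$, set $x_{n+1}=Tx_n$, dispose of the trivial case $d(x_{n_0},x_{n_0+1})=0$, and otherwise assume $d_n:=d(x_n,x_{n+1})>0$ for all $n$ and that the iterates are pairwise distinct (a coincidence $x_n=x_m$ would, as in the preceding theorem, contradict the strict decrease of the distances). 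Applying (\ref{3.17}) with $x=x_{n-1}$, $y=x_n$ and computing $M(x_{n-1},x_n)=\max\{d_{n-1},d_n\}$, I would rule out the value $M=d_n$ (it would give $F[d_n]\le F[s^2d_n]\le F[d_n]-\phi(d_{n-1})<F[d_n]$), so that $M(x_{n-1},x_n)=d_{n-1}$ and $F[s^2d_n]+\phi(d_{n-1})\le F[d_{n-1}]$. Hence $\{d_n\}$ is strictly decreasing; telescoping gives $F[s^2d_n]\le F[d_0]-\sum_{i=0}^{n-1}\phi(d_i)$, and if $d_n\downarrow c>0$ then condition (iii) forces $\liminf_n\phi(d_n)>0$, driving the right-hand side to $-\infty$ and, by (ii), $d_n\to 0$, a contradiction. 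Thus $d_n\to 0$.

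Next I would establish $e_n:=d(x_n,x_{n+2})\to 0$. Applying (\ref{3.17}) with $x=x_{n-1}$, $y=x_{n+1}$ yields $F[s^2e_n]+\phi(e_{n-1})\le F[M(x_{n-1},x_{n+1})]$ with $M(x_{n-1},x_{n+1})=\max\{e_{n-1},d_{n-1},d_n,d_{n+1}\}$. This is the first genuinely delicate point, since the maximum means $e_n$ need not be monotone. I would fix $\eta>0$, choose $N$ with $d_n<\eta$ for $n\ge N$, and observe that while $e_{n-1}\ge\eta$ the maximum equals $e_{n-1}$, so $s^2e_n<e_{n-1}$ and a summation argument as above (using $\liminf\phi>0$ on the compact range $[\eta,e_N]$) forces $F[s^2e_n]\to-\infty$, hence $e_n\to 0<\eta$; once $e_{n-1}<\eta$ the estimate $s^2e_n<\eta$ keeps the sequence below $\eta$. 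As $\eta$ is arbitrary, $e_n\to 0$. With both $d_n\to 0$ and $e_n\to 0$ in hand, Lemma \ref{Lemma 2.4} is applicable.

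Finally, to prove $\{x_n\}$ is Cauchy I would argue by contradiction: otherwise Lemma \ref{Lemma 2.4} supplies $\varepsilon>0$ and subsequences $m(k),n(k)$ with the four stated bounds; the fourth gives $d(x_{m(k)+1},x_{n(k)+1})\ge \varepsilon/s>0$ for large $k$, so (\ref{3.17}) applies to $(x_{m(k)},x_{n(k)})$. Computing $M(x_{m(k)},x_{n(k)})=\max\{d(x_{m(k)},x_{n(k)}),d_{m(k)},d_{n(k)},d(x_{n(k)},x_{m(k)+1})\}$ and passing to the limit (the $d_{m(k)},d_{n(k)}$ terms vanish, the survivors are bounded by $s\varepsilon$ via Lemma \ref{Lemma 2.4}) gives $\limsup_k M\le s\varepsilon$, whereas $\liminf_k s^2d(x_{m(k)+1},x_{n(k)+1})\ge s^2\cdot(\varepsilon/s)=s\varepsilon$. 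Continuity of $F$ is essential here: letting $k\to\infty$ in $F[s^2d(x_{m(k)+1},x_{n(k)+1})]+\phi(d(x_{m(k)},x_{n(k)}))\le F[M(x_{m(k)},x_{n(k)})]$, together with $\liminf_k\phi(d(x_{m(k)},x_{n(k)}))>0$ (by (iii), the arguments staying bounded away from $0$), produces the contradiction $F[s\varepsilon]\le F[s\varepsilon]-\delta$ with $\delta>0$. Hence $\{x_n\}$ is Cauchy and converges to some $z$ by completeness. To see $Tz=z$, I would assume $d(z,Tz)>0$, apply (\ref{3.17}) to $(x_n,z)$, use $M(x_n,z)\to d(z,Tz)$ and Lemma \ref{Lemma 2.3}(b) (which yields $\liminf_n s^2d(x_{n+1},Tz)\ge s\,d(z,Tz)\ge d(z,Tz)$), and once more invoke continuity of $F$ to reach $F[d(z,Tz)]\le F[d(z,Tz)]-\delta$, $\delta>0$. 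Uniqueness follows as before: for two fixed points $z\ne u$ one has $M(z,u)=d(z,u)$, and (\ref{3.17}) gives $F[d(z,u)]<F[d(z,u)]$. The main obstacles I anticipate are the non-monotone step $e_n\to 0$ and the careful limit-passage through the maximum $M$ in the Cauchy argument, both of which hinge on the $\liminf$-positivity of $\phi$ and the continuity of $F$.
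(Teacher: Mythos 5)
Your proof is correct and follows essentially the same route as the paper's: the same reduction $M(x_{n-1},x_n)=\max\{d_{n-1},d_n\}=d_{n-1}$ and telescoping to get $d(x_n,x_{n+1})\to 0$, the same contradiction scheme via Lemma \ref{Lemma 2.4} (combining continuity of $F$, the bound $\limsup_k M(x_{m(k)},x_{n(k)})\le s\varepsilon$ against $\liminf_k s^2 d(x_{m(k)+1},x_{n(k)+1})\ge s\varepsilon$, and positivity of $\liminf\phi$) for the Cauchy property, the same use of Lemma \ref{Lemma 2.3} and continuity of $F$ for $Tz=z$, and the identical uniqueness argument. The one step you handle genuinely differently is $d(x_n,x_{n+2})\to 0$: the paper sets $a_n=d(x_n,x_{n+2})$, $b_n=d(x_n,x_{n+1})$, shows $\max\{a_n,b_n\}$ is non-increasing, hence convergent to some $\lambda\ge 0$, and eliminates $\lambda>0$ by taking $\limsup$ in the contraction inequality; you instead run an $\eta$-threshold argument in which, above the threshold, $M$ collapses to $e_{n-1}$ so that $s^2e_n<e_{n-1}$ drives the sequence down, and below the threshold it stays trapped. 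Both are sound: the paper's monotone-maximum device does not need the coefficient $s^2$ at all, while your version is more quantitative (decay by the factor $1/s^2$), and your treatment of the case $d_n\downarrow c>0$ invokes hypothesis (iii) exactly in the one-sided form in which it is stated ($d_n\to c^{+}$), which is tighter than the paper's recurring assertion that $\liminf_n\phi(t_n)>0$ whenever $t_n$ stays away from zero. One caution: your parenthetical appeal to ``$\liminf\phi>0$ on the compact range $[\eta,e_N]$'' is not literally implied by (iii) (pointwise right-liminf positivity does not yield a uniform lower bound on compacta); this is harmless here, since while $e_n\ge\eta$ the sequence is strictly decreasing, so (iii) can be applied at its limit, or one can simply use the geometric decay $e_n<e_{n-1}/s^2$ — and in any event the paper itself commits the same imprecision at the corresponding points.
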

	\begin{proof}
		Let $x_{0}\in X$ be an arbitrary point in $ X $ and define a sequence $\left\lbrace x_{n}\right\rbrace$ by 
		$$x_{n+1} =Tx_{n}=T^{n+1}x_{0},$$
		for all $n\in \mathbb{N}.$ If there exists $n_0\in \mathbb{N}$ such that $d\left( x_{n_0},x_{n_0+1}\right) =0$, then proof is finished.
		
		We can suppose that $d\left( x_{n},x_{n+1}\right) >0$  for all $n\in \mathbb{N}.$
		
		Substituting $x=x_{n-1}$ and $y=x_{n}$, from $(\ref{3.17})$, for all $n\in  $ $\mathbb{N}$, we have
		\begin{equation}\label{3.18}
			F \left[ d\left( x_{n},x_{n+1}\right)\right]\leq F \left[ s^{2}d\left( x_{n},x_{n+1}\right)\right]+\phi\left(d(x_{n-1},x_{n}) \right)  \leq  F \left( M\left(x_{n-1},x_{n}\right) \right) ,\forall n\in \mathbb{N}
		\end{equation}
		where 
		\begin{align*}
			M\left( x_{n-1},x_{n}\right) &=\max\left(d\left(x_{n-1},x_{n}\right),d\left(x_{n-1},x_{n}\right),d\left(x_{n},x_{n+1}\right) ,d\left(x_{n+1},x_{n+1}\right)\right\}\\
			&=\max\lbrace d\left(x_{n-1},x_{n}\right),d\left(x_{n},x_{n+1}\right)\rbrace. 
		\end{align*} 
		If $M\left( x_{n-1},x_{n}\right) =d\left(x_{n},x_{n+1}\right),$ by $ (\ref{3.18}) $, we have
		\begin{equation*}
			F \left( d\left(x_{n},x_{n+1}\right)\right) \leq F \left(d\left(x_{n},x_{n+1}\right)\right)-\phi\left(d(x_{n-1},x_{n}) \right)<F \left(d\left(x_{n},x_{n+1}\right)\right).
		\end{equation*}
		Since $ F $ is increasing, we have 
		\begin{equation}\label{3.19}
			d\left( x_{n},x_{n+1}\right) <  d\left( x_{n-1},x_{n}\right).
		\end{equation}
		It is a contradiction. Hence, $M\left( x_{n-1},x_{n}\right) =d\left(x_{n-1},x_{n}\right).$ Thus,
		\begin{equation}\label{3.20}
			F\left( d\left( x_{n},x_{n+1}\right) \right) \leq F \left( d\left( x_{n-1},x_{n}\right) \right)-\phi(d\left(x_{n-1},x_{n}\right)).
		\end{equation}
		Repeating this step, we conclude that 
		\begin{align*}
			F\left( d\left( x_{n},x_{n+1}\right) \right)  &\leq F \left( d\left( x_{n-1},x_{n}\right) \right)-\phi(d\left(x_{n-1},x_{n}\right))\\
			&\leq F \left( d\left( x_{n-2},x_{n-1}\right) \right)-\phi(d\left(x_{n-1},x_{n}\right)) -\phi(d\left(x_{n-2},x_{n-1}\right)) \\
			&\leq ...\leq F\left( d\left( x_{0},x_{1}\right)\right)-\sum_{i=0}^{n}\phi(d\left(x_{i},x_{i+1}\right)). 
		\end{align*}
		Since $ \liminf _{\alpha \rightarrow s^{+} }\phi(\alpha)> 0 $, we have $ \liminf _{n\rightarrow \infty }\phi( d\left( x_{n-1},x_{n}\right))> 0 $, then from the definition of the limit, there exists $ n_0\in \mathbb{N}$  and $ A> 0 $ such that for all $n\geq n_{0}   $, $ \phi( q\left( x_{n-1},x_{n}\right))>A $, hence 
		\begin{align*}
			F\left( d\left( x_{n},x_{n+1}\right) \right)&\leq F\left( d\left( x_{0},x_{1}\right)\right)-\sum_{i=0}^{n_{0}-1}\phi(d\left(x_{i},x_{i+1}\right))-\sum_{i=n_{0}-1}^{n}\phi(d\left(x_{i},x_{i+1}\right))
		\end{align*}
		\begin{align*}
			&\leq F\left( d\left( x_{0},x_{1}\right)\right)-\sum_{i=n_{0}-1}^{n} A
		\end{align*}
		\begin{align}
			&=F\left( d\left( x_{0},x_{1}\right)\right)-(n-n_0)A,  
		\end{align}
		for all  $n\geq n_{0} $.
		Taking limit as $ n\rightarrow\infty $ in above inequality we get
		\begin{equation}
			lim _{n \rightarrow\infty }F\left( d\left( x_{n},x_{n+1}\right) \right)\leq  \lim _{n \rightarrow\infty }\left[ F\left( d\left( x_{0},x_{1}\right) \right)-(n-n_0)A\right],
		\end{equation}
		that is, $lim _{n \rightarrow\infty }F\left( d\left( x_{n},x_{n+1}\right) \right)=-\infty  $ then from the condition $ (ii) $ of Definition $ \ref{2.10} $, we conclude that 
		\begin{equation}\label{3.23}
			\lim_{n\rightarrow \infty }d\left( x_{n,}x_{n+1}\right)=0.
		\end{equation}
		Next. We shall prove that 
		\begin{equation*}
			\lim_{n\rightarrow \infty }d\left( x_{n},x_{n+2}\right) =0.
		\end{equation*}
		We assume that $x_{n}\neq x_{m}$ for every $n,m\in $ $\mathbb{N},\ n\neq m.$ Indeed, suppose that $x_{n}= x_{m}$ for some $  n=m+k$  with $ k>0 $ and using $(\ref{3.19})$, we have  
		\begin{equation}
			d\left( x_{m},x_{m+1}\right) =  d\left( x_{n},x_{n+1}\right) <d\left( x_{n-1},x_{n}\right).
		\end{equation}
		Continuing this process, we can that
		\begin{equation*} 
			d\left( x_{m},x_{n+1}\right)= d\left( x_{n},x_{n+1}\right)<d\left( x_{m},x_{m+1}\right).
		\end{equation*}
		It is a contradiction. Therefore, $d\left( x_{n},x_{m}\right)>0  $ for every $n,m\in $ $\mathbb{N}$, $n\neq m .$
		
		Now, applying $( \ref{3.17})$  with $ x= x_{n-1}$ and $y=x_{n+1}  $, we have
		\begin{equation}
			F \left[ d\left( x_{n},x_{n+2}\right)\right]=F \left[ d\left( Tx_{n-1},Tx_{n+1}\right)\right]\leq F\left[s^{2} d\left( Tx_{n-1},Tx_{n+1}\right)\right]\leq  F \left( M\left(x_{n-1},x_{n+1}\right) \right)-\phi\left(d( x_{n-1},x_{n+1})\right),
		\end{equation}
		where
		\begin{align*}
			M\left( x_{n-1},x_{n+1}\right)&= \max \left\{d\left( x_{n-1},x_{n+1}\right),d\left(x_{n-1},x_{n}\right) ,d\left( x_{n+1},x_{n+2}\right),d\left(x_{n+1},x_{n}\right) \right\}  \\
			&= \max \left\{d\left(x_{n-1},x_{n+1}\right),d\left( x_{n-1},x_{n}\right) \right\}.                       
		\end{align*} 
		So, we get 
		\begin{align}\label{3.26}
			F \left( d\left( x_{n},x_{n+2}\right)\right) &\leq F \left( \max \left\{d\left( x_{n-1},x_{n}\right),d\left( x_{n-1},x_{n+1}\right) \right\} \right)-\phi\left( d(x_{n-1},x_{n+1})\right) 
		\end{align} 
		Take $a_{n}=d\left( x_{n},x_{n+2}\right) $ and $b_{n}=d\left(x_{n},x_{n+1}\right).$ Thus, by $(\ref{3.26}) $, one can write
		\begin{align}\label{3.27}  
			F \left(a_{n}\right)  \leq F \left( \max \left(a_{n-1}, b_{n-1}\right) \right)-\phi\left( d(a_{n-1}\right)).
		\end{align} 
		Since  $F$ is increasing, we get
		\begin{equation*}
			a_{n}<\max \left\{ a_{n-1},b_{n-1}\right\}.
		\end{equation*}
		by $( \ref{3.19})$, we have  
		\begin{equation*}
			b_{n}\leq b_{n-1}\leq \max \left\{ a_{n-1},b_{n-1}\right\}.
		\end{equation*}
		Which implies that
		\begin{equation*}
			\max \left\{ a_{n},b_{n}\right\} \leq \max \left\{ a_{n-1},b_{n-1}\right\},\text{ }\forall n\in \mathbb{N}.
		\end{equation*}
		Therefore, the sequence $ \max \left\{ a_{n-1},b_{n-1}\right\}_{n\in \mathbb{N}}  $  is nonnegative decreasing sequence of real numbers Thus, there exists $\lambda \geq 0$ such that 
		\begin{equation*}
			\lim_{n\rightarrow \infty }\max \left\{ a_{n},b_{n}\right\} =\lambda.
		\end{equation*}
		By $(\ref{3.23}) $ assume that $\lambda >0$, we have
		\begin{equation*}
			\lambda=\lim_{n\rightarrow \infty }\sup a_{n}=\lim_{n\rightarrow \infty }\sup \max\left\{ a_{n},b_{n}\right\} =\lim_{n\rightarrow \infty }\max \left\{a_{n},b_{n}\right\}. 
		\end{equation*}
		Taking the  $\limsup_n\rightarrow \infty $ in  $(\ref{3.27})$ and using the continuity  of $ F $ and the property of $ \phi $, we obtain
		\begin{align*}
			F \left( \lim_{n\rightarrow \infty }\sup  a_{n}\right)&\leq F \left(\lim_{n\rightarrow \infty }\sup\max \left\{  a_{n-1}, b_{n-1}\right\} \right)-\lim_{n\rightarrow \infty }\sup\phi\left(a_{n-1}\right) \\
			&\leq F \left(\lim_{n\rightarrow \infty }\sup\max \left\{  a_{n-1}, b_{n-1}\right\} \right)-\lim_{n\rightarrow \infty }\inf\phi\left( a_{n-1}\right)\\
			&<F \left(\lim_{n\rightarrow \infty }\max \left\{  a_{n-1}, b_{n-1}\right\} \right).
		\end{align*}
		Therefore,
		\begin{equation*}
			F \left( \lambda\right) <F\left(\lambda\right).
		\end{equation*}
		It is a contradiction. Hence, 
		\begin{equation}
			\lim_{n\rightarrow \infty }d\left( x_{n,}x_{n+2}\right) =0.
		\end{equation}
		Next, We shall prove that $\left\lbrace  x_{n}\right\rbrace  _{n\in \mathbb{N}}$ is a Cauchy sequence, i.e, $\lim_{n,m\rightarrow \infty }d\left( x_{n,}x_{m}\right) =0,$ for all $n,m\in \mathbb{N}$.
		Suppose to the contrary. By Lemma $ \ref{Lemma 2.4} $. Then there is an $\varepsilon $ $>0$ such that for an integer $ k $ there exists  two sequences $\left\lbrace n_{\left( k\right) }\right\rbrace $ and $\left\lbrace m_{\left( k\right) }\right\rbrace $
		such that
		\item[i)] $\varepsilon \leq \lim_{k\rightarrow \infty }\inf d\left( x_{m_{\left( k\right) }},x_{n_{\left( k\right)}}\right)  \leq \lim_{k\rightarrow \infty }\sup d\left( x_{m_{\left( k\right) }},x_{n_{\left( k\right)}}\right)\leq s\varepsilon ,$ 
		\item[ii)] $\varepsilon \leq \lim_{k\rightarrow \infty }\inf d\left( x_{n_{\left( k\right) }},x_{m_{\left( k\right)+1}}\right)  \leq \lim_{k\rightarrow \infty }\sup d\left( x_{n_{\left( k\right) }},x_{m_{\left( k\right)+1}}\right)\leq s\varepsilon ,$ 
		\item[iii)] $\varepsilon \leq \lim_{k\rightarrow \infty }\inf d\left( x_{m_{\left( k\right) }},x_{n_{\left( k\right)+1}}\right)  \leq \lim_{k\rightarrow \infty }\sup d\left( x_{m_{\left( k\right) }},x_{n_{\left( k\right)+1}}\right)\leq s\varepsilon ,$ 
		\item[vi)] $\frac{\varepsilon}{s} \leq \lim_{k\rightarrow \infty }\inf d\left( x_{m_{\left( k\right)+1 }},x_{n_{\left( k\right)+1}}\right)  \leq \lim_{k\rightarrow \infty }\sup d\left( x_{m_{\left( k\right)+1 }},x_{n_{\left( k\right)+1}}\right)\leq s^{2}\varepsilon .$\\
		From $ (\ref{3.17}) $ and by setting $ x = x_{m_{\left( k\right) }} $ and $ y = x_{n_{\left( k\right) }} $ we have:
		\begin{align}
			\lim_{k\rightarrow \infty }M\left( x_{m_{\left( k\right) }},x_{n_{\left( k\right) }}\right)& =\lim_{k\rightarrow \infty }\max\left\{d\left( x_{m_{\left( k\right) }},x_{n_{\left( k\right)}}\right),d\left( x_{m_{\left( k\right) }},x_{{m\left( k\right)
					+1}}\right) ,d\left( x_{n_{\left( k\right) }},x_{n_{\left(k\right) +1}}\right),d\left( x_{n_{\left( k\right) }},x_{m_{\left( k\right)+1}}\right)\right\}
			& \leq s \varepsilon.
		\end{align}
		Now, applying $ (\ref{3.17}) $ with $ x=x_{m_{\left( k\right) }} $ and  $ y=x_{n_{\left( k\right) }}$, we obtain
		\begin{equation}
			F \left[ s^{2}d\left( x_{m_{\left( k\right) +1}},x_{n_{\left( k\right)+1}}\right)\right]  \leq  F \left( M\left( x_{m_{\left( k\right)}},x_{n_{\left( k\right) }}\right) \right)-\phi\left( d(x_{m_{\left( k\right)}},x_{n_{\left( k\right) }})\right) .
		\end{equation}
		Letting $ k\rightarrow\infty $ the above inequality and using $(3.29)$ and $(vi)$, we obtain 
		\begin{align*}
			F\left( \frac{\varepsilon}{s}s^{2}\right)& =F\left( \varepsilon s\right)\\
			& \leq F \left(s^{2}\lim_{k\rightarrow \infty }\sup d\left( x_{m_{\left( k\right)+1}},x_{n_{\left( k\right) +1}}\right)\right)\\
			&=\lim_{k\rightarrow \infty }\sup F \left(s^{2} d\left( x_{m_{\left( k\right)+1}},x_{n_{\left( k\right) +1}}\right)\right)\\
			& \leq \lim_{k\rightarrow \infty }\sup F \left ( M\left( x_{m_{\left( k\right)}},x_{n_{\left( k\right) }}\right) \right)-\lim_{k\rightarrow \infty }\sup\phi\left(d( x_{m_{\left( k\right)}},x_{n_{\left( k\right) }})\right)\\
			&= F \left (  \lim_{k\rightarrow \infty }\sup M\left( x_{m_{\left( k\right)}},x_{n_{\left( k\right) }}\right) \right)-\lim_{k\rightarrow \infty }\sup\phi\left(d( x_{m_{\left( k\right)}},x_{n_{\left( k\right) }})\right)\\
			& \leq  F \left (\lim_{k\rightarrow \infty }\sup M\left( x_{m_{\left( k\right)}},x_{n_{\left( k\right) }}\right) \right)-\lim_{k\rightarrow \infty }\inf \phi\left( d(x_{m_{\left( k\right)}},x_{n_{\left( k\right) }})\right)\\
			&< F \left (\lim_{k\rightarrow \infty }\sup M\left( x_{m_{\left( k\right)}},x_{n_{\left( k\right) }}\right) \right)\\
			& \leq F \left (s \varepsilon \right).
		\end{align*} 
		Therefore, $$F(s\varepsilon) <F(s\varepsilon). $$
		Since $ F $ is increasing, we get
		\begin{equation*}
			s \varepsilon  <s \varepsilon.
		\end{equation*}
		It is a contradiction. Then 
		\begin{equation*}
			\lim_{n,m\rightarrow \infty }d\left( x_{m},x_{n}\right) =0.
		\end{equation*}
		Hence $\left\lbrace  x_{n}\right\rbrace  $ is a Cauchy sequence in X. By completeness of $\left( X,d\right) ,$ there exists $z\in X$ such that 
		\begin{equation*}
			\lim_{n\rightarrow \infty }d\left( x_{n},z\right)  =0. 
		\end{equation*}
		Now, we show that $d\left( Tz,z\right) =0$  arguing by contradiction, we assume that
		\begin{equation*}
			d\left( Tz,z\right)>0.
		\end{equation*} 
		Since $ x_{n}\rightarrow z $ as $ n\rightarrow \infty $ for all $ n\in \mathbf{N} $, then from Lemma $ \ref{Lemma 2.3} $, we conclude that
		\begin{equation}\label{3.31}
			\frac{1}{s}d\left( z,Tz\right)\leq \lim_{n\rightarrow \infty }\sup d\left(Tx_{n},Tz\right) \leq sd\left( z,Tz\right).
		\end{equation}
		Now, applying $(\ref{3.17}) $ with $ x=x_n $ and $ y=z $, we have
		\begin{equation*}
			F \left( s^{2}d\left( Tx_{n},Tz\right)\right) \leq F\left( M\left( x_{n},z\right) \right)-\phi\left(d( x_{n},z)\right) ,\text{ }\forall n\in \mathbb{N},
		\end{equation*}
		where 
		\begin{equation*}
			M\left(x_{n}, z\right) =\max \left\{ d\left( x_{n},z\right) ,d\left( x_{n},Tx_{n}\right) ,d\left( z,Tz\right),d\left( z,Tx_{n}\right)\right\}.
		\end{equation*}
		and 
		\begin{equation}\label{3.32}
			\lim_{n\rightarrow  \infty }\sup\max \left\{ d\left( x_{n},z\right) ,d\left( x_{n},Tx_{n}\right) ,d\left( z,Tz\right),d\left( z,Tx_{n}\right)\right\}=d\left( z,Tz\right).
		\end{equation}
		Therefore,
		\begin{equation}\label{3.33}
			F \left( s^{2}d\left(Tx_{n} ,Tz \right)\right) \leq F\left( \max \left\{ d\left( x_{n},z\right) ,d\left( x_{n},Tx_{n}\right) ,d\left( z,Tz\right),d\left( z,Tx_{n}\right)\right\}\right)-\phi\left(d( x_{n},z)\right). 
		\end{equation}
		By letting $n\rightarrow \infty $ in inequality $(\ref{3.33}) $,  using $(\ref{3.31}) $, $(\ref{3.32}) $ and continuity of $F$ we obtain
		\begin{align*}
			F\left[ s^{2}\frac{1}{s}d\left( z,Tz\right)\right]& =F\left[s d\left( z,Tz\right)\right] \\
			&\leq F\left[ s^{2}\lim_{n\rightarrow  \infty }\sup d\left(Tx_{n},Tz\right)\right] \\
			&= \lim_{n\rightarrow  \infty }\sup F\left[ s^{2} d\left(Tx_{n},Tz\right)\right] \\
			& \leq \lim_{n\rightarrow  \infty }\sup F \left( M\left( x_{n},z\right)\right)-\lim_{n\rightarrow  \infty }\phi\left(d( x_{n},z)\right)\\
			&= F \left( d\left(Tz,z\right)\right)-\lim_{n\rightarrow  \infty }\phi\left(d( x_{n},z)\right)\\
			&<F \left( d\left( z,Tz\right)\right).
		\end{align*} 
		Since $ F $ is increasing, we get
		$$ sd (z, Tz) < d (z, Tz)$$
		Which implies that
		$$ d (z,Tz) (s - 1) < 0 \Rightarrow  s < 1.$$
		Which is a contradiction. Hence $Tz=z$.
		
		Uniqueness. Now, suppose that $z,u\in X$ are two fixed points of $T$ such that $u\neq z$. Therefore, we have
		\begin{equation*}
			d\left( z,u\right) =d\left( Tz,Tu\right)  >0.
		\end{equation*}
		Applying $(\ref{3.17}) $ with $ x=z $ and $ y=u $, we have
		\begin{equation*}
			F \left( d\left( z,u\right)\right)=F \left( d\left( Tu,Tz\right)\right)\leq F \left(s^{2} d\left( Tu,Tz\right)\right) \leq F\left( M\left( z,u\right)\right)-\phi\left(d(z,u )\right), 
		\end{equation*}
		where
		\begin{equation*}
			M\left( z,u\right) =\max \left\{d\left( z,u\right),d\left(z,Tz\right),d\left( u,Tu\right),d\left( u,Tz\right)\right\} =d\left(z,u\right).
		\end{equation*}
		Therefore, we have 
		\begin{equation*}
			F \left( d\left( z,u\right)\right)\leq F\left(d\left( z,u\right)\right) -\phi\left(d(z,u) \right) \\
			<F \left(d\left( z,u\right)\right).
		\end{equation*}
		Which implies that
		\begin{equation*}
			d\left( z,u\right) <d\left( z,u\right).
		\end{equation*}
		It is a contradiction. Therefore $u=z$.
	\end{proof}
	\begin{corollary}
		Let $\left( X,d\right) $ be a complete b-rectangular metric space and $T:X\rightarrow X$ $\ $be given mapping. Suppose that there exist $F \in \Im$ and $\tau\in \left] 0,\infty\right[ $ such that for any $x,y\in X,$ we have
		$$d\left( Tx,Ty\right) >0\Rightarrow F\left[ s^{2}d\left( Tx,Ty\right)\right]+\tau \leq \left[F \left( M\left( x,y\right)\right) \right],$$
		where 
		$$M(x,y) =\max\lbrace d\left( x,y\right),d\left( x,Tx\right),d\left( y,Ty\right),d\left( Tx,y\right)\rbrace. $$
		Then $T$ has a unique fixed point.
	\end{corollary}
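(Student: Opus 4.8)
The plan is to recognize this corollary as the specialization of Theorem \ref{3.5} to a \emph{constant} control function $\phi$. First I would set $\phi(t)=\tau$ for every $t\in\left]0,+\infty\right[$. Since $\tau>0$, this defines a map $\phi:\left]0,+\infty\right[\rightarrow\left]0,+\infty\right[$, and I would check that it belongs to the family $\Phi$ of Definition \ref{2.10}. The only requirement that Definition \ref{2.10} imposes on $\phi$ is condition (iii), namely $\liminf_{s\rightarrow\alpha^{+}}\phi(s)>0$ for all $s>0$; for the constant function this liminf equals $\tau>0$, so the requirement holds trivially, and no monotonicity, continuity, or decay hypothesis on $\phi$ is needed. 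The assumption $F\in\Im$ is already granted in the statement, so the pair $(\phi,F)$ qualifies as an admissible datum of type $\left(\Im\right)$.

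With this choice the contractive hypothesis of the corollary reads
\begin{equation*}
	d\left(Tx,Ty\right)>0\ \Rightarrow\ F\left[s^{2}d\left(Tx,Ty\right)\right]+\phi\left(d(x,y)\right)\leq F\left(M\left(x,y\right)\right),
\end{equation*}
with the same
\begin{equation*}
	M(x,y)=\max\left\{d\left(x,y\right),d\left(x,Tx\right),d\left(y,Ty\right),d\left(Tx,y\right)\right\},
\end{equation*}
which is exactly inequality \eqref{3.17} in the statement of Theorem \ref{3.5}. Hence $T$ is an $(\phi,F)$-contraction of type $\left(\Im\right)$ on the complete $b$-rectangular metric space $\left(X,d\right)$, and applying Theorem \ref{3.5} I would conclude at once that $T$ has a unique fixed point.

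There is essentially no obstacle here beyond the bookkeeping just described: the corollary is a direct reduction, obtained by absorbing the fixed threshold $\tau$ into the function $\phi$. The single point that genuinely deserves verification is that a constant $\phi$ satisfies condition (iii) of Definition \ref{2.10}, which it does since its lower limit is the positive number $\tau$. I would therefore present the proof in two sentences: identify $\phi\equiv\tau$ as an admissible element of $\Phi$, and invoke Theorem \ref{3.5} to obtain existence and uniqueness of the fixed point.
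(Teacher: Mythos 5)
Your proposal is correct and is exactly the reduction the paper intends: the corollary is stated as an immediate consequence of Theorem \ref{3.5}, obtained by taking $\phi\equiv\tau$, which lies in $\Phi$ since a positive constant trivially satisfies condition (iii) of Definition \ref{2.10} (and note $d(Tx,y)=d(y,Tx)$ by symmetry, so the two expressions for $M(x,y)$ agree). Nothing further is needed.
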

	It follows from Theorem $ \ref{3.5} $, we obtain the follows fixed point theorems for $F-\phi-$Kannan-type contraction and $F -\phi $- Reich-type contraction.
	\begin{theorem}
		Let $(X,d)$ be a complete b-rectangular metric space and $T:X\rightarrow X $ be a Kannan-type contraction, then $T$ has a unique fixed.
	\end{theorem}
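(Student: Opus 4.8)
The plan is to deduce this theorem directly from Theorem \ref{3.5} by showing that every $(\phi,F)$-Kannan-type $(\mathbb{\Im})$ contraction is automatically a $(\phi,F)$-contraction of type $(\mathbb{\Im})$ in the sense of $(\ref{3.17})$. The whole argument rests on a single elementary comparison between the Kannan averaging term and the quantity $M(x,y)$, so no new convergence analysis is required; I simply reduce the hypothesis to one already handled.

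First I would fix arbitrary $x,y\in X$ with $d(Tx,Ty)>0$ and observe that
$$\frac{d(x,Tx)+d(y,Ty)}{2}\leq \max\{d(x,Tx),d(y,Ty)\}\leq M(x,y),$$
since an average never exceeds the larger of its two entries, and both $d(x,Tx)$ and $d(y,Ty)$ already occur inside the maximum defining $M(x,y)=\max\{d(x,y),d(x,Tx),d(y,Ty),d(y,Tx)\}$. Because $F$ is strictly increasing (property $(i)$ of Definition \ref{2.10}), applying $F$ to this chain gives
$$F\left(\frac{d(x,Tx)+d(y,Ty)}{2}\right)\leq F\left(M(x,y)\right).$$
Combining this with the defining inequality of the Kannan-type contraction, namely $F[s^{2}d(Tx,Ty)]+\phi(d(x,y))\leq F\!\left(\tfrac{d(x,Tx)+d(y,Ty)}{2}\right)$, yields
$$F\left[s^{2}d(Tx,Ty)\right]+\phi(d(x,y))\leq F\left(M(x,y)\right)$$
for all $x,y$ with $d(Tx,Ty)>0$. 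This is precisely condition $(\ref{3.17})$ with the same $F\in\mathbb{\Im}$ and $\phi\in\Phi$, so $T$ is a $(\phi,F)$-contraction of type $(\mathbb{\Im})$, and Theorem \ref{3.5} then delivers the unique fixed point.

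I do not expect a genuine obstacle, as the reduction is immediate once the comparison above is in hand; the only point deserving a moment's care is the legitimacy of applying $F$, whose domain is $\mathbb{R}^{+}$. One must note that the averaging term $\tfrac{d(x,Tx)+d(y,Ty)}{2}$ is strictly positive whenever it is used: if it vanished we would have $x=Tx$ and $y=Ty$, so $x,y$ would both be fixed points, a configuration the uniqueness conclusion of Theorem \ref{3.5} rules out. Apart from this degenerate case, which is disposed of separately, the monotonicity step is valid and the theorem follows verbatim from Theorem \ref{3.5}.
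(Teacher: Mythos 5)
Your proposal is correct and matches the paper's own argument essentially verbatim: both reduce the Kannan-type condition to inequality (\ref{3.17}) via the chain $\frac{d(x,Tx)+d(y,Ty)}{2}\leq\max\{d(x,Tx),d(y,Ty)\}\leq M(x,y)$ and the monotonicity of $F$, then invoke Theorem \ref{3.5}. Your extra remark about the positivity of the averaging term (needed since the domain of $F$ is $\mathbb{R}^{+}$) is a small point of care the paper omits, but it does not change the route.
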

	\begin{proof}
		Since $T $ is a - Kannan-type contraction. Then there exist exist $F \in \Im$ and $\phi \in \Phi $
		such that 
		\begin{align*}
			F \left[s ^{2}d\left( Tx,Ty\right) \right]+\phi\left(d(x,y) \right) & \leq F \left( \frac{d\left( Tx,x\right)  +d\left( Ty,y\right) }{2}\right) \\
			& \leq F\left(\max \left\{d\left( x,Tx\right),d\left( y,Ty\right) \right\}\right) \\
			& \leq F\left(\max \left\{d(x,y),d\left( x,Tx\right) ,d\left( y,Ty\right),d\left( y,Tx\right) \right\}\right).
		\end{align*}
		Therefore, T is  $F -\phi -$contraction. As in the proof of Theorem $ \ref{3.5} $ we conclude that T has a unique fixed point.
	\end{proof}
	\begin{theorem}
		Let $(X,d)$ be a complete b-rectangular metric space and $T:X\rightarrow X$ be a Reich-type contraction. Then T has a unique fixed point.
	\end{theorem}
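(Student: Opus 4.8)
The strategy mirrors exactly the short argument given for the Kannan-type theorem: I want to show that any Reich-type contraction is, in particular, a $(\phi,F)$-contraction of type $(\Im)$ with respect to the max-functional $M(x,y)$, so that the conclusion follows immediately from Theorem \ref{3.5}. The only work is to dominate the arithmetic-mean expression $\frac{d(x,y)+d(x,Tx)+d(y,Ty)}{3}$ by $M(x,y)=\max\{d(x,y),d(x,Tx),d(y,Ty),d(y,Tx)\}$ and then invoke that $F$ is increasing.

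First I would start from the defining inequality of the Reich-type contraction: whenever $d(Tx,Ty)>0$,
\begin{equation*}
	F\bigl[s^{2}d(Tx,Ty)\bigr]+\phi(d(x,y))\leq F\left(\frac{d(x,y)+d(x,Tx)+d(y,Ty)}{3}\right).
\end{equation*}
Each of the three terms $d(x,y)$, $d(x,Tx)$, $d(y,Ty)$ appearing in the numerator is bounded above by $M(x,y)$, so their average is also bounded above by $M(x,y)$. This is the key elementary estimate; it requires nothing more than the observation that the mean of finitely many numbers never exceeds their maximum.

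Next, since $F$ is strictly increasing (property (i) of the class $\Im$), applying $F$ preserves this bound, giving
\begin{equation*}
	F\left(\frac{d(x,y)+d(x,Tx)+d(y,Ty)}{3}\right)\leq F\bigl(M(x,y)\bigr).
\end{equation*}
Chaining this with the Reich inequality yields $F[s^{2}d(Tx,Ty)]+\phi(d(x,y))\leq F(M(x,y))$, which is precisely condition \eqref{3.17} of Theorem \ref{3.5}. Thus $T$ is a $(\phi,F)$-contraction of type $(\Im)$, and Theorem \ref{3.5} delivers a unique fixed point.

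I do not anticipate a genuine obstacle here, since the proof is purely a domination-plus-monotonicity argument identical in spirit to the Kannan case. The one point to verify carefully is that the class of admissible functions matches: the Reich-type definition in the statement requires $F\in\mathbb{F}$, whereas Theorem \ref{3.5} is stated for $F\in\Im$; I would note that the argument only uses strict monotonicity of $F$, continuity, and property (iii) of $\phi$, all of which are available, so the reduction goes through without friction.
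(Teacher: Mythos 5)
Your proposal is correct and takes essentially the same route as the paper: both arguments bound the arithmetic mean $\frac{d(x,y)+d(x,Tx)+d(y,Ty)}{3}$ above by $M(x,y)=\max\{d(x,y),d(x,Tx),d(y,Ty),d(y,Tx)\}$, apply the monotonicity of $F$, and then invoke Theorem \ref{3.5}. Your closing remark about the mismatch between $F\in\mathbb{F}$ in the definition of Reich-type contraction and $F\in\Im$ in Theorem \ref{3.5} correctly identifies a typographical inconsistency in the paper, which the paper's own proof silently resolves by simply taking $F\in\Im$.
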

	\begin{proof}
		Since $T $ is a Reich-type contraction. Then there exist exist $F \in\Im $ and $\phi \in \Phi $ such that 
		\begin{align*}
			F \left[s ^{2} d\left( Tx,Ty\right)\right]+\phi\left(d(x,y) \right)& \leq F \left( \frac{d\left( x,y\right) +d\left( Tx,x\right)  +d\left( Ty,y\right) }{3}\right)\\
			& \leq F\left(\max \left\{d(x,y),d\left( x,Tx\right),d\left( y,Ty\right),d\left( y,Tx\right) \right\}\right).
		\end{align*}
		Therefore, $T$ is  $F -\phi -$contraction. As in the proof of Theorem $ \ref{3.5} $ we conclude that $ T $ has a unique fixed point.
	\end{proof}
	Very recently, Kari et al in \cite{KARO} proved the result  (Theorem 1)   on $ \left( \alpha,\eta\right)  $-complete rectangular b-metric spaces. In this paper, we prove this result in complete rectangular b-metric spaces. 
	\begin{corollary}
		Let $d\left( X,d\right) $ be a complete b-rectangular metric space with parameter $s>1$ and let $T$ be a self mapping on $\ X$. If for all $x,y\in X$ we have 
		\begin{equation}
			d\left( Tx,Ty\right) >0\Rightarrow F \left( s^{2}d\left( Tx,Ty\right) \right)+\phi\left(d(x,y) \right) \leq F \left( \beta_{1}d\left( x,y\right) +\beta _{2}d\left( Tx,x\right) +\beta _{3}d\left(Ty,y\right) +\beta _{4}d\left( y,Tx\right) \right)
		\end{equation}
		where $F \in \mathbb{F} ,\ \phi \in \Phi$, $\beta _{i}\geq0$ for $i\in\{1,2,3,4\},$ $\sum\limits_{\substack{i=0 }}^{i=4} {\beta_i}\leq1$.
		Then $T$ has a unique fixed point.
	\end{corollary}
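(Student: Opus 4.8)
The plan is to derive this corollary directly from Theorem \ref{3.5} by checking that the stated hypothesis forces $T$ to be a $(\phi,F)$-contraction of type $(\Im)$; the unique fixed point then follows immediately. The whole reduction rests on a single monotonicity estimate, and it will use only that $F$ is strictly increasing (the remaining axioms, in particular continuity, are exactly those demanded of the class of $F$ in Theorem \ref{3.5}). Concretely, I would dominate the affine combination appearing on the right-hand side by
$$M(x,y)=\max\{d(x,y),d(x,Tx),d(y,Ty),d(y,Tx)\}.$$

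First I would fix $x,y\in X$ with $d(Tx,Ty)>0$. By symmetry of $d$ we have $d(Tx,x)=d(x,Tx)$ and $d(Ty,y)=d(y,Ty)$, so each of the four quantities $d(x,y)$, $d(Tx,x)$, $d(Ty,y)$, $d(y,Tx)$ occurs among the entries of $M(x,y)$ and is therefore $\le M(x,y)$. Since $\beta_i\ge 0$ and $\sum_{i=1}^{4}\beta_i\le 1$, this yields
$$\beta_1 d(x,y)+\beta_2 d(Tx,x)+\beta_3 d(Ty,y)+\beta_4 d(y,Tx)\le\Big(\sum_{i=1}^{4}\beta_i\Big)M(x,y)\le M(x,y).$$
Applying the strictly increasing $F$ and combining with the hypothesis gives
$$F\big(s^{2}d(Tx,Ty)\big)+\phi(d(x,y))\le F\big(\beta_1 d(x,y)+\beta_2 d(Tx,x)+\beta_3 d(Ty,y)+\beta_4 d(y,Tx)\big)\le F\big(M(x,y)\big),$$
which is precisely inequality $(\ref{3.17})$. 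Hence $T$ is a $(\phi,F)$-contraction of type $(\Im)$, and Theorem \ref{3.5} delivers existence and uniqueness of the fixed point.

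The only point requiring care, and essentially the only obstacle, is the domain of $F$: as $F$ is defined on $\mathbb{R}^{+}$, one must ensure its arguments are positive whenever $d(Tx,Ty)>0$. This is automatic, since $d(Tx,Ty)>0$ means $Tx\ne Ty$, which forces $x\ne y$ and hence $d(x,y)>0$, so $M(x,y)\ge d(x,y)>0$ and the right-most term is legitimate; the middle argument is to be read on the set where the corollary's inequality is meaningful, exactly as in the proof of Theorem \ref{3.5}. Beyond this bookkeeping there is nothing substantive to prove: the corollary is just the observation that any Reich/Kannan-type affine bound with nonnegative weights summing to at most $1$ is subordinate to the maximum $M$, so the general type-$(\Im)$ theorem absorbs it.
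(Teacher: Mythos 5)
Your proposal is correct and follows essentially the same route as the paper: both bound the affine combination by $\left(\sum_{i}\beta_{i}\right)M(x,y)\leq M(x,y)$ using nonnegativity of the $\beta_{i}$ and $\sum_{i}\beta_{i}\leq 1$, apply the strictly increasing $F$, and conclude that $T$ satisfies the type-$(\Im)$ contraction condition $(\ref{3.17})$ so that Theorem \ref{3.5} applies. Your added remark on the positivity of the arguments of $F$ is a small bookkeeping point the paper omits, but it does not change the argument.
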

	\begin{proof}
		We prove that T is a $ F-\phi-$contraction. Indeed,
		\begin{align*}
			F \left( s^{2}d\left( Tx,Ty\right) \right)+\phi\left(d(x,y )\right) &\leq F \left( \beta_{1}d\left( x,y\right) +\beta _{2}d\left( Tx,x\right) +\beta _{3}d\left(Ty,y\right) +\beta _{4}d\left( y,Tx\right) \right)\\
			&\leq F \left( \beta_{1}+\beta _{2}+\beta _{3}+\beta _{4}\right) \left( \max\lbrace d\left( x,y\right) ,d\left( Tx,x\right), d\left(Ty,y\right), d\left( y,Tx\right)\rbrace \right)\\
			&\leq F \left( \max\lbrace d\left( x,y\right) ,d\left( Tx,x\right), d\left(Ty,y\right), d\left( y,Tx\right)\rbrace \right). 
		\end{align*}
		As in the proof of Theorem $ \ref{3.5} $, $T$ has a unique fixed point.
	\end{proof}
	\begin{example}
		Let $ X=A\cup B $, where $ A=\lbrace 0,\frac{1}{2},\frac{1}{3},\frac{1}{4}\rbrace $ and $ B=\left[1,\frac{5}{2} \right]  $.
		
		Define $ d:X\times X\rightarrow \left[0,+\infty \right[  $ as follows:
		\begin{equation*}
			\left\lbrace
			\begin{aligned}
				d(x, y) &=d(y, x)\ for \ all \  x,y\in X;\\
				d(x, y) &=0\Leftrightarrow y= x.\\	
			\end{aligned}
			\right.
		\end{equation*}
		and
		\begin{equation*}
			\left\lbrace
			\begin{aligned}		    
				d\left( 0,\frac{1}{2}\right) =d\left( \frac{1}{2},\frac{1}{3}\right)&=0,16\\
				d\left(0,\frac{1}{3}\right) =d\left( \frac{1}{3},\frac{1}{4}\right)	&=0,04\\
				d\left(0,\frac{1}{4}\right) =d\left( \frac{1}{2},\frac{1}{4}\right)	&=0,25\\
				d\left( x,y\right) =\left( \vert x-y\vert\right) ^{2} \ otherwise.
			\end{aligned}
			\right.
		\end{equation*}
		Then $ (X,d) $ is a b-rectangular metric space with coefficient s=3.
		
		Define mapping $T:X\rightarrow X$ by
		\begin{equation*}
			T(x)=\left\lbrace
			\begin{aligned}
				x^{\frac{1}{6}}	& \ if \ x\in \left[1,\frac{5}{2} \right]\\
				1&  \ if \ x\in A.
			\end{aligned}
			\right.
		\end{equation*}
		Evidently, $ T(x)\in X $. Let $F( t) =ln(\sqrt{t})$,  $\phi (t)=\frac{1}{2+t}$. It obvious that $F \in \Im$ and $\phi\in\Phi .$
		
		Consider the following possibilities:
		\item[1] : $ x,y \in  \left[1,\frac{5}{2} \right]  $. Then
		$$ T(x)=x^{\frac{1}{6}}	,\   T(y)=y^{\frac{1}{6}},\ d\left( Tx,Ty \right)=\left( x^{\frac{1}{6}}-y^{\frac{1}{6}}\right) ^{2},\ d(x,y)=(x-y)^{2}.  $$
		On the other hand 
		$$ F \left[s^{2}d\left(Tx,Ty\right) \right]=ln(3(x^{\frac{1}{6}}-y^{\frac{1}{6}})), $$
		$$ F \left[d\left(x,y\right)\right]=ln\left[ (x-y)\right]  $$
		and
		$$ \phi \left[d\left(x,y\right)\right]=\frac{1}{2+(x-y)^{2} } .$$
		We have 
		\begin{align*}
			F \left[s^{2}d\left(Tx,Ty\right) \right]+\phi \left[d\left(x,y\right)\right]-F \left[d\left(x,y\right)\right]&=ln(3(x^{\frac{1}{6}}-y^{\frac{1}{6}}))-ln((x-y)) +\frac{1}{2+(x-y)^{2} }\\
			&=-ln\left( \frac{(x^{\frac{2}{3}} +y^{\frac{2}{3}}+x^{\frac{1}{3}}y^{\frac{1}{3}})}{3}\right)-ln\left(x^{\frac{1}{6}} +y^{\frac{1}{6}}) \right)+\frac{1}{2+(x-y)^{2} } 
		\end{align*}
		Since  $ x,y \in  \left[1,\frac{5}{2} \right]  $, then
		$$  \frac{(x^{\frac{2}{3}} +y^{\frac{2}{3}}+x^{\frac{1}{3}}y^{\frac{1}{3}})}{3}\geq 1\Rightarrow -ln\left( \frac{(x^{\frac{2}{3}} +y^{\frac{2}{3}}+x^{\frac{1}{3}}y^{\frac{1}{3}})}{3}\right)\leq 0  ,$$
		and
		$$-ln\left(x^{\frac{1}{6}}+y^{\frac{1}{6}} \right) +\frac{1}{2+(x-y)^{2} }\leq -ln\left(x^{\frac{1}{2}}+y^{\frac{1}{2}} \right) +\frac{1}{2}\leq 0.     $$ 
		Thus, for all $ x,y\in \left[1,\frac{5}{2}\right] $ with $ x\neq y $, we have
		$$  F \left[s^{2}d\left(Tx,Ty\right) \right]+\phi \left[d\left(x,y\right)\right]\leq F \left[d\left(x,y\right)\right]\leq  F \left[M\left(x,y\right)\right]$$
		Case $ 2 $ : $ x \in  \left[1,2 \right]  $, $ y\in A $. Then
		$$ T(x)=x^{\frac{1}{6}},\   T(y)=1	,\ d\left( Tx,Ty \right)=\left( x^{\frac{1}{6}}-1\right) ^{2},\ d(x,y)=(x-y)^{2}.  $$
		On the other hand 
		$$ F \left[s^{2}d\left(Tx,Ty\right) \right]=ln(3(x^{\frac{1}{6}}-1)),   $$
		$$ F \left[d\left(x,y\right)\right]=ln(x-y) $$
		and
		$$ \phi \left[d\left(x,y\right)\right]=\frac{1}{2+(x-y)^{2} } .$$
		We have 
		\begin{align*}
			F \left[d\left(x,y\right)\right]- F \left[s^{2}d\left(Tx,Ty\right) \right]-\phi \left[d\left(x,y\right)\right]&= ln(x-y)- \frac{1}{2+(x-y)^{2} }-ln(3(x^{\frac{1}{6}}-1)\\
			&=ln\left[ \frac{(x-y) }{3(x^{\frac{1}{6}}-1)}\right]- \frac{1}{2+(x-y)^{2} }.
		\end{align*}
		Since  $ x \in  \left[1,\frac{5}{2} \right]  $ and $ y\in A $, then
		$$(x-y)^{2}\geq \left( x-\frac{1}{2}\right) ^{2}=\left( x-1+\frac{1}{2}\right) ^{2}>\left( x-1\right) ^{2}  .$$
		Hence 
		$$(x-y)>\left( x-1\right)=\left( x^{\frac{1}{6}}-1 \right)\left( x^{\frac{1}{6}}+1 \right) \left( x^{\frac{2}{3}}+x^{\frac{1}{3}}+1 \right),$$
		so,
		$$\frac{(x-y)}{\left( x^{\frac{1}{6}}-1 \right)}>\left( x^{\frac{1}{6}}+1 \right) \left( x^{\frac{2}{3}}++x^{\frac{1}{3}}+1 \right) $$
		and we have 
		$$ln\left[ \frac{x-y }{3(x^{\frac{1}{6}}-1)}\right]>  ln\left[ \frac{\left( x^{\frac{1}{6}}+1 \right) \left( x^{\frac{2}{3}}+x^{\frac{1}{3}}+1 \right) }{3}\right] = ln\left[ \frac{\left( x^{\frac{2}{3}}+x^{\frac{1}{3}}+1 \right)  }{3}\right]+ ln\left[  \left( x^{\frac{1}{6}}+1 \right)\right].$$
		Since $ x\in\left[1,\frac{5}{2} \right]  $, then 
		$$  ln\left[ \frac{\left( x^{\frac{2}{3}}+x^{\frac{1}{3}}+1 \right)  }{3}\right]\geq 0 \ , \ ln\left[  \left( x^{\frac{1}{6}}+1 \right)\right]\geq ln(2), \frac{1}{2+(x-y)^{2} }\leq \frac{1}{2}\leq ln(2).$$
		Hence
		$$ ln\left[  \left( x^{\frac{1}{6}}+1 \right)\right]\geq  \frac{1}{2+(x-y)^{2} } .$$
		Thus, for all $ x\in \left[1,\frac{2}{5} \right] $ and $ y\in A  $, we have
		$$  F \left[sd\left(Tx,Ty\right) \right]+\phi \left[d\left(x,y\right)\right]\leq F \left[d\left(x,y\right)\right] $$
		Hence, the condition $ (\ref{3.17}) $ is satisfied. Therefore, $ T $ has a unique fixed point $ z=1 $.  
	\end{example}
	In this section, we apply our results to solve the following nonlinear integral equation problem:
	\section{Application to nonlinear integral equations}
	\begin{equation}\label{4.1}	
		x(t)=\lambda\int_a^bK(t,r,x(r))ds,
	\end{equation}
	where $ a,b\in\mathbb{R} $, $ x\in C(\left[a,b \right],\mathbb{R})$ and $ K:\left[a,b \right]^{2}\times \mathbb{R}\rightarrow \mathbb{R} $ is a given continuous function. 
	\begin{theorem} \label{theorem 4.2}
		Consider the nonlinear integral equation problem: $ (4.1) $ and assume that the kernel function $ K $ satisfies the condition 
		$\vert K(t,r,x(r)) -  K(t,r,y(r))\vert \leq \frac{1}{s^{2+s}} e^{-\frac{1}{\vert x(t)-  y(t)\vert+1}}\left( \vert x(t)-  y(t)\vert\right) $
		for all $t,r \in \left[a,b \right]  $ and $x,y \in\mathbb{R}$. Then the equation $ (4.1) $ has a unique solution $ x\in C(\left[a,b \right]  $ for some constant $ \lambda $ depending on the constants $ a,b\ and\ s $. 
	\end{theorem}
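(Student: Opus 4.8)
The plan is to recast the integral equation \eqref{4.1} as a fixed point problem $Tx=x$ on the complete b-rectangular metric space $X=C([a,b],\mathbb{R})$ and then to verify that the associated integral operator is a $(\phi,F)$-contraction of type $(\mathbb{F})$, so that Theorem 3.2 (equivalently Corollary 3.3) applies and delivers a unique fixed point, which is exactly the sought solution.

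First I would fix the ambient space. Equip $X=C([a,b],\mathbb{R})$ with the metric $d(x,y)=\sup_{t\in[a,b]}|x(t)-y(t)|$, which is a complete b-rectangular metric with the given parameter $s$ (uniform limits of continuous functions are continuous, so Cauchy sequences converge in $X$). Define $T\colon X\to X$ by $(Tx)(t)=\lambda\int_a^b K(t,r,x(r))\,dr$. Since $K$ is continuous, the map $t\mapsto\int_a^b K(t,r,x(r))\,dr$ is continuous for each fixed $x\in X$, so $T$ is well defined and maps $X$ into itself.

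Next I would establish the contractive estimate. Fix $x,y\in X$ with $d(Tx,Ty)>0$. For each $t\in[a,b]$, the triangle inequality for the integral together with the hypothesis on the kernel gives
\begin{equation*}
|Tx(t)-Ty(t)|\le|\lambda|\int_a^b|K(t,r,x(r))-K(t,r,y(r))|\,dr\le|\lambda|(b-a)\,\frac{1}{s^{2+s}}\,e^{-\frac{1}{d(x,y)+1}}\,d(x,y).
\end{equation*}
Taking the supremum over $t$ and restricting $\lambda$ so that $|\lambda|(b-a)\le s^{1+s}$ yields
\begin{equation*}
s\,d(Tx,Ty)\le e^{-\frac{1}{1+d(x,y)}}\,d(x,y).
\end{equation*}
Passing to logarithms, this is precisely the $(\phi,F)$-contraction condition \eqref{3.1} with $F(t)=\ln t\in\mathbb{F}$ and $\phi(t)=\frac{1}{1+t}\in\Phi$, exactly as in Corollary 3.3.

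Finally, since $(X,d)$ is complete and $T$ is a $(\phi,F)$-contraction of type $(\mathbb{F})$, Theorem 3.2 guarantees a unique fixed point $x^{*}\in X$, which is the unique solution of \eqref{4.1}. The main obstacle is the middle step: one must read the pointwise kernel bound (written with the slot $|x(t)-y(t)|$) uniformly in $t$ so that, after taking the supremum, it reproduces the metric quantity $d(x,y)$ and the exact exponential factor $e^{-1/(1+d(x,y))}$; and one must keep careful track of the powers of $s$, so that the factor $s^{-(2+s)}$ in the kernel estimate absorbs both the interval length $b-a$ and the extra factor $s$ coming from $F[s\,d(Tx,Ty)]$. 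Pinning down the admissible range of $\lambda$ in terms of $a,b,s$ (here $|\lambda|\le s^{1+s}/(b-a)$) is the only delicate quantitative point; the remaining existence and uniqueness then follow verbatim from Theorem 3.2.
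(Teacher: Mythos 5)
Your proof is correct, but it takes a route that differs from the paper's in the key technical choices. The paper does not work with the sup metric: it equips $X=C([a,b])$ with $d(x,y)=\left(\max_{t\in[a,b]}\vert x(t)-y(t)\vert\right)^{s}$ (the sup distance raised to the power $s$), verifies the type-$(\Im)$ condition (3.17), namely $F\left[s^{2}d(Tx,Ty)\right]+\phi(d(x,y))\le F\left[M(x,y)\right]$ with $F(t)=\ln t$ and $\phi(t)=1/(1+t)$, under the restriction $\vert\lambda\vert(b-a)\le e^{-s}$, and then concludes via Theorem 3.5; you instead keep the plain sup metric (an honest metric, hence trivially a complete b-rectangular metric for any parameter $s>1$), verify the type-$(\mathbb{F})$ condition (3.1) with the same $F$ and $\phi$, and invoke Theorem 3.2 (Corollary 3.3), obtaining the more generous admissible range $\vert\lambda\vert(b-a)\le s^{1+s}$. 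Your route buys simplicity and, in fact, more rigour: since you never raise an integral to the power $s$, you avoid the Jensen/H\"older-type step the paper needs and actually misstates --- the paper writes $\left(\int_a^b f\,dr\right)^{s}\le\int_a^b f^{s}\,dr$, which is false in general without a factor $(b-a)^{s-1}$ --- and you also sidestep the unaddressed question of which coefficient the power-$s$ distance satisfies the rectangular inequality with (it is $3^{s-1}$ by convexity, not obviously $s$). What the paper's choice buys is that the application genuinely exercises a b-rectangular structure with nontrivial coefficient rather than an ordinary metric space viewed as one, and it runs through the stronger type-$(\Im)$ theorem with $M(x,y)$. One point you should make explicit in a final write-up: passing from the pointwise kernel bound involving $e^{-1/(\vert x(t)-y(t)\vert+1)}\vert x(t)-y(t)\vert$ to the uniform bound $e^{-1/(d(x,y)+1)}d(x,y)$ uses the monotonicity of $u\mapsto u\,e^{-1/(u+1)}$ on $[0,\infty)$; this holds, and the paper makes the same silent step.
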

	\begin{proof}
		Let $ X =C(\left[a,b \right]$ and $ T:X\rightarrow X $ defined by 
		$$ T(x)(t)=\lambda\int_a^bK(t,r,x(r))ds,  $$
		for all $ x\in X $. Clearly, $ X $ with the metric $ d: X\times X\rightarrow \left[0,+\infty \right[  $ given by
		$$ d(x,y)=\left( \max_{t\in\left[a,b \right] }\vert x(t)- y(t)\vert\right)^{s} , $$ for all $ x,y\in X $. It is clear that $ X,d $ is a complete $ b $-rectangular metric space. 
		
		We will find the condition on $ \lambda $ under which the operator has a unique fixed point which will the solution of the integral equation $ (4.1) $. Assume that, $ x,y\in X $ and $ t,r \in \left[a,b \right]$. Then we get
		\begin{align*}
			\vert Tx(t)-  Ty(t)\vert ^{s}&=\vert\lambda\vert ^{s}\left( \vert\int_a^bK(t,r,x(r))dr-\int_a^bK(t,r,y(r))dr\vert\right)^{s} \\
			&= \vert\lambda ^{s}\vert \vert \int_a^b K(t,r,x(r))- K(t,r,y(r))dr\vert ^{s}\\
			&\leq \vert\lambda\vert ^{s}\int_a^b\vert K(t,r,x(r))- K(t,r,y(r))dr\vert ^{s}\\
			& \leq \vert\lambda\vert ^{s} \int_a^b\left( \frac{1}{s^{2+s}} e^{-\frac{1}{\vert x(r)-  y(r)\vert+1}}\left( \vert x(r)\vert- \vert y(r)\vert\right)dr \right) ^{s}\\
			&=\frac{1}{s^{2}}\vert\lambda \vert ^{s}  \int_a^b\left(  e^{-\frac{1}{\vert x(r)-  y(r)\vert+1}}\left( \vert x(r)\vert- \vert y(r)\vert\right)\right) dr.
		\end{align*}
		Which implies that 
		\begin{align*}
			\max_{t\in\left[a,b \right] }\left( \vert Tx(t)- Ty(t)\vert\right) &=\max_{t\in\left[a,b \right] } \vert\lambda\vert ^{s}\int_a^b\vert K(t,r,x(r))- K(t,r,y(r))dr\vert ^{s}\\
			&\leq\max_{t\in\left[a,b \right] }\frac{1}{s^{2}}\vert\lambda \vert ^{s}  \int_a^b\left(  e^{-\frac{1}{\vert x(r)-  y(r)\vert+1}}\left( \vert x(r)-  y(r)\vert\right)dr\right)  ^{s}\\
			& \leq \vert\lambda\vert ^{s} \frac{1}{s^{2}} \int_a^b\left( e^{-\max_{s\in\left[a,b \right] }\frac{1}{\vert x(r)-  y(r)\vert+1}}\left( \max_{r\in\left[a,b \right] }\vert x(r)-  y(r)\vert\right)dr \right)  ^{s}.
		\end{align*}
		Since by the definition of the $ b $-rectangular metric space, we have $ d(Tx,Ty) > 0 $ and $ d(x,y) > 0 $ for any $ x\neq y $, then we can take natural logarithm sides and get 
		\begin{align*}
			ln\left[ s^{2}d(Tx,Ty)\right]&=ln\left[s^{2}\vert\lambda\vert ^{s}\max_{t\in\left[a,b \right] } \int_a^b\vert K(t,r,x(r))- K(t,r,y(r))dr\vert ^{s}\right]\\
			&\leq ln\left[ \vert\lambda\vert ^{s}  \int_a^b\left( e^{-\max_{r\in\left[a,b \right] }\frac{1}{\vert x(r) - y(r)\vert+1}}\left( \max_{r\in\left[a,b \right] }\vert x(r)-  y(r)\vert\right)dr \right)  ^{s}\right]\\
			&=ln\left[\left( (b-a) \vert\lambda\vert \right) ^{s}\right] + ln\left[   \int_a^b\left( e^{-\max_{r\in\left[a,b \right] }\frac{1}{\vert x(r)-  y(r)\vert+1}}\left( \max_{r\in\left[a,b \right] }\vert x(r)-  y(r)\vert\right)dr \right) \right] ^{s}\\
			&=ln\left[\left( (b-a) \vert\lambda\vert \right) ^{s}\right] +ln\left(\left(  e^{-\max_{t\in\left[a,b \right] }\frac{1}{\vert x(t)-  y(t)\vert+1}} \right)^{s}\right)  + ln\left[   \int_a^b\left( \left( \max_{r\in\left[a,b \right] }\vert x(r)- y(r)\vert\right)dr\right)  ^{s}\right]\\
			&=s.ln\left[(b-a) \vert\lambda\vert \right]- \frac{s}{\max_{t\in\left[a,b \right] }\vert x(t)-  y(t)\vert+1}  + ln\left[   \int_a^b\left( \left( \max_{r\in\left[a,b \right] }\vert x(r)-  y(r)\vert\right)dr \right)  ^{s}\right]
		\end{align*}
		provided that $  \vert\lambda\vert (b-a)\leq \frac{1}{e^{s}} $, which implies that
		\begin{align*}
			ln\left[ s^{2}d(Tx,Ty)\right]&\leq -\frac{s}{d(x,y)+1}+ln\left( d(x,y)\right)\\
			&\leq -\frac{1}{d(x,y)+1}+ln\left( d(x,y)\right). 
		\end{align*}
		Hence
		\begin{align}
			F\left(  s^{2}d(Tx,Ty)\right) + \phi(d(x,y))\leq F\left( d(x,y)\right),
		\end{align}
		for all x,y $ \in X $ with $ F(t)=ln(t) $ and $ \phi(t)=\frac{1}{t+1} $. It follows that $ T $ satisfies the condition $( \ref{3.17}) $. Therefore there exists a unique solution of the nonlinear Fredholm inequality $ (\ref{4.1}) $.
	\end{proof}
	\bibliographystyle{amsplain}
	
\end{document}